\newtheorem{theorem}{Theorem}
\crefname{theorem}{theorem}{Theorems}
\Crefname{Theorem}{Theorem}{Theorems}
\newaliascnt{lemma}{theorem}
\newtheorem{lemma}[lemma]{Lemma}
\crefname{lemma}{lemma}{lemmas}
\Crefname{Lemma}{Lemma}{Lemmas}
\newaliascnt{corollary}{theorem}
\newtheorem{corollary}[corollary]{Corollary}
\crefname{corollary}{corollary}{corollaries}
\Crefname{Corollary}{Corollary}{Corollaries}
\newaliascnt{proposition}{theorem}
\crefname{proposition}{proposition}{propositions}
\Crefname{Proposition}{Proposition}{Propositions}
\newaliascnt{definition}{theorem}
\crefname{definition}{definition}{definitions}
\Crefname{Definition}{Definition}{Definitions}
\newaliascnt{definitionProposition}{theorem}
\crefname{Proposition and Definition}{Proposition and Definition}{Proposition and Definition}
\Crefname{Proposition and Definition}{Proposition and Definition}{Proposition and Definition}
\newaliascnt{remark}{theorem}
\crefname{remark}{remark}{remarks}
\Crefname{Remark}{Remark}{Remarks}
\crefname{example}{example}{examples}
\Crefname{Example}{Example}{Examples}
\crefname{figure}{figure}{figures}
\Crefname{Figure}{Figure}{Figures}
\newtheorem{assumption}{\textbf{H}\hspace{-3pt}}
\Crefname{assumption}{\textbf{H}\hspace{-3pt}}{\textbf{H}\hspace{-3pt}}
\crefname{assumption}{\textbf{H}}{\textbf{H}}
\Crefname{assumptionL}{\textbf{L}\hspace{-3pt}}{\textbf{L}\hspace{-3pt}}
\crefname{assumptionL}{\textbf{L}}{\textbf{L}}
\Crefname{assumptionG}{\textbf{G}\hspace{-3pt}}{\textbf{G}\hspace{-3pt}}
\crefname{assumptionG}{\textbf{G}}{\textbf{G}}
\newcommand{\ocint}[1]{\left(#1\right]}
\newcommand{\ccint}[1]{\left[#1\right]}
\def\wrt{with respect to}
\def\rset{\mathbb{R}}
\def\zset{\mathbb{Z}}
\newcommand\parenth[1]{\left(#1\right)}
\newcommand\po{\left(}
\newcommand\pf{\right)}
\newcommand\crochet[1]{\left[#1\right]}
\newcommand\accol[1]{\left\{#1\right\}}
\newcommand\intoo[2]{\left(#1,#2\right)}
\newcommand\intof[2]{\left(#1,#2\right]}
\newcommand\intff[2]{\left[#1,#2\right]}
\newcommand\indic[1]{\mathds{1}_{#1}}
\newcommand\indiacc[1]{\mathds{1}_{\{#1\}}}
\newcommand\sumforce[2]{\underset{#1}{\overset{#2}{\sum}}}
\newcommand\argmin[2]{\underset{#1}{\text{argmin}}\left\{#2\right\}}
\def\eqsp{\,}
\newcommand\abs[1]{\left\lvert#1\right\rvert}
\newcommand\norm[2]{\left\|{#2}\right\|_{#1}}
\newcommand\pent[1]{\left\lfloor{#1}\right\rfloor}
\newcommand\rmd{\mathrm{d}}
\newcommand\rme{\mathrm{e}}
\newcommand\set[2]{\left\lbrace #1 \, : \, #2 \right\rbrace}
\newcommand\reel{\mathbb{R}}
\def\rset{\mathbb{R}}
\newcommand\reelp{\mathbb{R_{+}}}
\newcommand\relatif{\mathbb{Z}}
\newcommand\iid{i.i.d. }
\newcommand\prob[1]{\mathbb{P}\left(#1\right)}
\newcommand\esp[1]{\mathbb{E}\left[#1\right]}
\newcommand\var[1]{\mathbb{V}\mathrm{ar}\left(#1\right)}
\def\targetmoment{m}
\def\momentestim{\widehat{m}}
\def\tdens{f}
\def\param{M}
\def\adens{\tdens^\param}
\def\edens{\widehat{\tdens}}
\def\lepskiparam{\widehat{\param}}
\def\etat{k}
\newcommand\betaker[2]{\mathbb{B}^{#1,#2}}
\def\cstholder{C_{\beta,L}}
\newcommand{\Pbf}{\mathbf{P}}
\newcommand{\Ebf}{\mathbf{E}}
\newcommand{\Pbb}{\mathbb{P}}
\newcommand{\Ebb}{\mathbb{E}}
\newcommand{\Lcal}{\mathcal{L}}
\def\ie{i.e.}
\begin{document}
\title{Density estimation for RWRE}
\author{A. Havet }
\address{Centre de Math\'ematiques Appliqu\'ees, \'Ecole polytechnique, CNRS Universit\'e Paris-Saclay}
\email{antoine.havet@polytechnique.edu}
\author{ M. Lerasle}\address{Laboratoire de Math\'ematiques d'Orsay, Univ. Paris-Sud, CNRS, Universit\'e Paris Saclay}
\email{matthieu.lerasle@polytechnique.edu}
\author{\'Eric Moulines}\address{Centre de Math\'ematiques Appliqu\'ees, \'Ecole polytechnique, CNRS Universit\'e Paris-Saclay}
\email{eric.moulines@polytechnique.edu}
\maketitle
\begin{abstract}
 We consider the problem of non-parametric density estimation of a random environment from the observation of a single trajectory of a random walk in this environment. We first construct a density estimator using the beta-moments.  We then show that the Goldenshluger-Lepski method can be used to select the beta-moment. We prove non-asymptotic bounds for the supremum norm of these estimators for both the recurrent and the transient to the right case.  A simulation study supports our theoretical findings.
\end{abstract}
\section{Introduction}
%

A random walk in random environment (RWRE) provides a simple model to describe various problems, for example the propagation of heat, diffusion of  matter through a physical medium, or DNA-unzipping experiments. In many situations of practical interest,  the medium in which the system evolve is very irregular. In these cases, it is  natural  to model these irregularities as a random environment. The definition of an RWRE involves two ingredients: (i) the environment, which is an \iid\ sample of some unknown disribution $\nu$ and (ii) the random-walk whose transition probabilities are dermined by the environment.  This paper considers the problem of recovering the density $\tdens$ of the distribution $\nu$ of the environment of a RWRE on $\relatif$ based on the observation of a single trajectory of the RWRE.

Since their introduction by Chernov \cite{Chernov:1967}, RWRE have been widely studied in the probability literature; see for example \cite{Zei:2012} for a recent overview of probabilistic results on RWRE in $\relatif$ and more generally in $\relatif^d$. On the other hand, statistical inference for RWRE has emerged only recently with the appearance of RWRE in several statistical models such as DNA-unzipping experiments or DNA- polymerase phenomenon \cite{Alemany2012, Baldazzi2007, Baldazzi2006, Huguet2009, Koch2002}. In these applications, the main task is usually to recover the environment itself and, when it is the case, an estimator of $\nu$ can be considered as a preliminary  step in the construction of a an empirical Bayes estimator.

More precisely, the problem of recovering $\nu$ was originally considered in \cite{Ade_Enr:2004} who introduced an estimator of the moments of the distribution. The article \cite{Ade_Enr:2004} considered random walks in general state spaces but the estimators they deduced had poor statistical performance. More recently, \cite{Fal_Lou_Mat:2014, Fal_Glo_Lou:2014, comets:falconnet:2014,Comets2016Maximum} considered the random walk on $\relatif$ and studied the parametric estimation of $\nu$. More precisely, they proved consistency and asymptotic normality of the maximum likelihood estimator in the ballistic and sub-ballistic regimes as well as its efficiency in the ballistic regime (see Section~\ref{sec:setting} for details regarding these notions). Some of these consistency results have been extended to i.i.d. environments in recurrent regimes in \cite{Comets2016Maximum} for a slightly different estimator and also to Markovian environments in \cite{AndLouMat2015}.


It is shown in \cite{comets:falconnet:2014} and \cite{diel:lerasle:2016}  that the beta-moments of the distribution $\nu$ can be estimated consistently from a single trajectory of the RWRE. In \cite{diel:lerasle:2016}, these estimators of moments are used to construct an estimator of the cumulative distribution function (c.d.f.) of the random environment. In this paper, we use these moment estimators to construct estimators of the probability density.

Recovering the probability density  of a distribution from its moment is a classical problem in statistics called \textit{moment reconstruction} \cite{akhiezer:1965}. Density estimators of a distribution based on its beta-moments (also referred to as beta-kernel estimators) have already been studied in the classical i.i.d. direct  observation setting, where a sample $(Z_1,\dots,Z_n)$ of the unknown distribution is available for the inference (see for example \cite{chen1999BetaKernel,bouezmarni:rolin:2003,mnatsakanov:2008} and the references therein). In the direct observation setting, \cite{chen:1999} proposed to estimate the density  $f$ at a point $x \in \ccint{0,1}$  by a single appropriately chosen $\beta$-moment
\[
\hat{f}_h(x)= \frac{1}{n} \sum_{i=1}^n \betaker{\frac{x}{h}+1}{\frac{1-x}{h}+1}(Z_i) \eqsp,
\]
where $h$ is the smoothing bandwidth and for any $(a,b)$ in $\reel_+^*\times\reel_+^*$,  $\betaker{a}{b}$ is the beta function
\begin{equation}
\label{def:BetaKer}
\betaker{a}{b}(u) = \frac{\Gamma(a+b)}{\Gamma(a)\Gamma(b)} u^{a-1} (1-u)^{b-1} \indic{0\leqslant u \leqslant 1} \eqsp.
\end{equation}
\cite{chen1999BetaKernel} introduced this estimator for a density function on $\ccint{0,1}$ in order to remove the boundary bias of the standard kernel estimator. He proved that this estimator is free of boundary bias.  This beta-kernel density estimator has later been shown to be minimax under classical regularity assumptions on the density $f$ by \cite{bertin:klutchnikoff:2010}.  \cite{bertin:klutchnikoff:2010}.

Our contribution is threefold. First, we propose a non-parametric density estimator for the random environment of a RWRE. To our best knowledge, this is the first density estimator of the distribution of the environment. Our proposed methodolgy follows the same steps than the classical density estimation problem but our construction is not a "kernel" estimator, contrary to the "direct-observation" case.
The unknown density $f$ of $\nu$ is first approximated at every point $x\in(0,1)$ by a sequence of properly chosen beta kernels following  \cite{chen1999BetaKernel,mnatsakanov:2008}. The $\beta$-moments are estimated using the methods outlined in \cite{diel:lerasle:2016}. The proposed density estimator is a function of  bandwidth parameter. We propose and analyze a method to choose automatically this bandwidth parameter, based on the Goldenshluger-Lepski algorithm \cite{goldenshluger:lepski:2008}. This extra step of selection is crucial here to optimize our bound without any prior knowledge of the regime of the walk or the regularity of the unknown density (see Section~\ref{sec:MainRes} for details). The Goldenshluger-Lepski method was also used in \cite{bertin:klutchnikoff:2014} to build adaptive estimators in the i.i.d. setting based on the minimax estimators of \cite{bertin:klutchnikoff:2010}.

Our second contribution is a derivation of the first non-parametric risk bounds for any density estimator of $f$ based on the observation of a single trajectory of a RWRE. These are exploiting the results recently developed in \cite{diel:lerasle:2016} for the stochastic part of the risk. We are of course also using some of the results on the beta-kernel estimator presented in \cite{Bouezmarni2003Consistency,mnatsakanov:2008,bertin:klutchnikoff:2010}.
Our rates do not match minimax rates of density estimation in i.i.d. setting.
Nevertheless, our results outperform those obtained in \cite{Comets2016Maximum} in the recurrent regime where only consistency results for very particular densities were derived. Furthermore, our estimator does not require a prior knowledge on the regime of the walk contrary to that of \cite{Comets2016Maximum} - the results in this work presuppose that RWRE is recurrent, which is not required here.

We finally investigate the numerical behaviour of our estimator in a Monte Carlo experiment.
It is interesting to notice that our estimator behaves reasonably 
in ballistic regimes even if the chain has a linear drift. This emphasizes the difference between population and individual parameters estimation problems. It is clear that recovering the environment itself is impossible in the ballistic regime, yet estimating the law of the random environment is still possible.
Related results  have already been reported in a parametric setting for example in \cite{Fal_Lou_Mat:2014} but it comes perhaps as a surprise that the same behaviour is observed in the  non-parametric setting.
Both theoretical results and this first simulations consider the case where the chain is observed until it reaches a fixed site $n$. Depending on the regime, the number of observed steps of the random walk can therefore be very different, it is typically $O(n)$ in the ballistic regime and $\rme^{O(\sqrt{n})}$ in the recurrent regime. Our simulations also illustrate how the quality of estimation deteriorates when going from nearly recurrent regime to nearly ballistic regime.

The paper is organised as follows. In Section~\ref{sec:setting}, we summarize some basic results on RWRE on $\relatif$ and on the likelihood of the observations before defining our basic estimator.
In~\Cref{sec:estimator-construction}, we introduce our estimator. In Section~\ref{sec:MainRes}, we state our main results : convergence rates for the basic estimators and for the Goldenshluger-Lepski estimator derived from these. Section~\ref{sec:SimStud} presents Monte Carlo experiment supporting our theoretical claims. Proofs of the main results are gathered in Section~\ref{sec:Proof}.

\section{Random walks in random environment (RWRE)}\label{sec:setting}
Denote by  $E = \intoo{0}{1}^{\mathbb{Z}}$  the \textit{set of environments}  endowed with $\mathcal{E}=\mathcal{B}\left(\intoo{0}{1}\right)^\mathbb{Z}$, the $\sigma$-field generated by the cylinders. 
Denote by $\left(X_t\right)_{t\in\relatif_+}$ the canonical process of the space $\mathbb{Z}^{\mathbb{Z}_+}$ endowed with the $\sigma$-field generated by the cylinders. For $\omega \in E$, we define the random walk in the environment $\omega$ as the time-homogeneous Markov chain with transition probabilities given, for all $x,y \in \zset$ by
\[
p_\omega(x,y)= \begin{cases} 
\omega_x & \text{if $y=x+1$} \\
1-\omega_x & \text{if $y=x-1$} \\
0 & \text{otherwise} 
\end{cases}
\]
In all the sequel, it is assumed that the Markov chain is started from $0$.
That is to say, given the environment $\omega$, the random walk currently at point $x \in \zset$, will make a one-unit step to the right with probability $\omega_x$ or to the left with probability $1-\omega_x$.
The probability measure $\Pbf_\omega$ that determines the distribution of the random walk in a given
environment $\omega$ is called the \emph{quenched distribution}. 

Here the environment is determined by the sequence of random variables $\omega= (\omega_y)_{y \in \zset}$.
In this paper, we assume that the random probabilities $(\omega_x)_{x \in \zset}$ are \iid\ with common distribution $\nu$. The \textit{environment} $\omega$ is thus distributed according to $\Pbb^\nu=\nu^{\otimes\relatif}$.\

By averaging the quenched probability with respect to the environment distribution we obtain the \emph{annealed distribution} $\Pbf^\nu$ on $\left(2^\mathbb{Z}\right)^{\otimes\relatif_+}$ defined by:
\[
\Pbf^\nu(\cdot) = \int_E \Pbf_\omega(\cdot) \Pbb^\nu(\mathrm{d}\omega).
\]
Expectation with respect to the annealed probability measure $\Pbf^\nu$ will be denoted $\Ebf^\nu$.
Note that the random walk $\left(X_t\right)_{t\in\relatif_+}$ is a Markov chain only conditionally on the fixed environment (\ie\ \wrt\ to the quenched distribution $\Pbf_\omega$) but the Markov property fails under the annealed probability measure $\Pbf^\nu$. This is because the past history canot be neglected, as it provides information on the environment.  
The random walk in \iid random environment on $\mathbb{Z}$ is the random sequence $\left(X_t\right)_{t\in\relatif_+}$ considered under annealed distribution $\Pbf^\nu$.

The asymptotic behaviour of the walk $(X_t)_{t\in \relatif_+}$ depends on the distribution of the ratio
\begin{equation}
\label{eq:definition-rho-0}
\rho_0=\frac{1-\omega_0}{\omega_0} \eqsp.
\end{equation}
More precisely, if $\Ebb^\nu\crochet{|\log\rho_0|}$ is finite, Solomon \cite{solomon:1975} proved the following classification:
\begin{enumerate}[(i)]
	\item if $\Ebb^\nu\crochet{\log\rho_0} \ne 0$, then $(X_t)_{t\in \relatif_+}$ is transient ($\Pbf^\nu$-a.s.); moreover if $\Ebb^\nu\crochet{\log\rho_0} < 0$, then $\lim_{t\to+\infty}X_t=+\infty$, $\Pbf^\nu$-a.s., the process $(X_t)_{t \in \zset_+}$ is transient to the right.
	\item if $\Ebb^\nu\crochet{\log\rho_0}=0$,  then $(X_t)_{t\in \relatif_+}$ is recurrent; moreover, $\limsup_{t\to+\infty}X_t=+\infty$ and $\liminf_{t\to+\infty}X_t=-\infty$, $\Pbf^\nu$-a.s.
\end{enumerate}
In the transient case, the random walk escapes to infinity, and it is reasonable to ask at what speed. For the simple RWRE, the asymptotic velocity was obtained by \cite{solomon:1975,kesten:kozlov:spitzer:1975}.

The transient case may further be divided into two sub-cases, called ballistic and sub-ballistic which correspond to a linear and sub-linear speed for the random walk.
Denote by $T_n$ the first hitting time of $n\in \relatif_+$,
\begin{equation}
\label{eq:definition-T-n}
T_n=\inf\accol{t\in\relatif_+,\ X_t=n} \eqsp.
\end{equation}
Assuming that that the RWRE is transient $\Ebb^\nu[\log \rho_0] < 0$, we can distinguish 
\begin{enumerate}
\item If $\Ebb^\nu[\rho_0] < 1$, then $\Pbf^\nu$-a.s., 
\[
T_n/n \to \frac{1 + \Ebb^\nu[\rho_0]}{1 - \Ebb^\nu[\rho_0]}
\]
and the RWRE is called \emph{ballistic}.
\item If $\Ebb^\nu[\rho_0] \geq 1$, then $\Pbf^\nu$-a.s., $T_n/n \to \infty$ and the RWRE is called sub-ballistic.
\end{enumerate}
The fluctuations of $T_n$ may be characterized more precisely.  Suppose that the distribution of $\log \rho_0$ is non arithmetic (that is the group generated by the support of $\log\rho_0$ is dense in $\rset$) and that there exists $\kappa\in(0,\infty)$ such that
\begin{align}\label{eq:kappa}
\Ebb^\nu\crochet{\rho_0^\kappa}=1\  \text{ and }\ \Ebb^\nu\crochet{\rho_0^\kappa\log^+(\rho_0)}<\infty
\end{align}
where $\log^+(x)=\log(x\vee 1)$. A simple convexity argument shows that if  $\kappa$ exists then it is unique. The value of $\kappa$ determines the asymptotic behaviour of $(X_t)_{t\in \relatif_+}$.
Then, it follows from \cite{kesten:kozlov:spitzer:1975} that
\begin{enumerate}
	\item if $\kappa<1$,
	$T_n/n^{1/\kappa}$ and $X_t/t^\kappa$ converge in $\Pbf^\nu$-distribution to some non trivial distribution,
	\item if $\kappa=1$,
	$T_n/n\log n$ and $(\log t/t)X_t$ converge in $\Pbf^\nu$-probability to a non zero constant,
	\item if $\kappa>1$,
	$T_n/n$ and $X_t/t$ converge in $\Pbf^\nu$-probability to a non zero constant.
\end{enumerate}
In the first two cases, the random walk is sub-ballistic and in the last case, $(X_t)_{t\in \relatif_+}$ is  ballistic as $T_n$ and $X_t$ grow linearly.

When the RWRE is recurrent, the fluctuations of $(X_t)_{t\in \relatif_+}$ have been evaluated by Sinai \cite{Sinai:1982}: suppose that $\Ebb^\nu\crochet{\log\rho_0}=0$, $\Ebb^\nu\crochet{\log^2\rho_0}>0$ and that the support of $\rho_0$ is included in $(0,1)$, then $X_t/(\log t)^2$ converges in $\Pbf^\nu$-distribution to a non trivial limit (see also \cite{Zei:2012} for some extensions under relaxed versions of this assumption).

Our results hold under the assumptions of \cite{diel:lerasle:2016}, that is those of \cite{kesten:kozlov:spitzer:1975}, in the transient case, and a slightly weaker version of those of \cite{Sinai:1982} in the recurrent case. Consider the following assumptions:
\begin{assumption}
\label{assum:recurrent}
$\Ebb^\nu\crochet{\log\rho_0}=0$,  $\Ebb^\nu\crochet{\log^2\rho_0}>0$ and there exists $a>0$, such that $\Ebb^\nu\crochet{\rho_0^a}+\Ebb^\nu\crochet{\rho_0^{-a}}<\infty$.
\end{assumption}
\begin{assumption}
\label{assum:transient-to-right}
$\Ebb^\nu\crochet{\log\rho_0}<0$,  the distribution of $\log \rho_0$ is non arithmetic
and there exists $\kappa\in(0,\infty)$ such that $\Ebb^\nu\crochet{\rho_0^\kappa}=1$ and  $\Ebb^\nu\crochet{\rho_0^\kappa\log^+(\rho_0)}<\infty$.
\end{assumption}
Under \Cref{assum:recurrent}, $(X_t)_{t \in \relatif_+}$ is recurrent.
Under \Cref{assum:transient-to-right}, $(X_t)_{t\in \relatif_+}$ is transient to the right.
In both cases, the hitting time $T_n$ is almost surely finite for any $n\in\relatif_+$.
We assume that the RWRE is observed until the first time it hits the state $n$. The observations are thus given by $X_0,\ldots,X_{T_n}$.

Following \cite{comets:falconnet:2014}, for any $t_0>0$ and $y\in \relatif$, define by
$L(t_0,y)$ and $R(t_0,y)$ the number of steps to the left and to the right until time $t_0$ and from site $y$:
\begin{align*}
	L(t_0,y)&=\sum_{0\leqslant t\leqslant t_0-1}\indiacc{X_t=y,X_{t+1}=y-1},\\
	R(t_0,y)&=\sum_{0\leqslant t\leqslant t_0-1}\indiacc{X_t=y,X_{t+1}=y+1} \eqsp.
\end{align*}
The likelihood $\Lcal_{\nu}\parenth{X_{0},\ldots,X_{T_n}}$ of the observations (where $T_n$ is defined in \eqref{eq:definition-T-n}) can be expressed as follows \cite{comets:falconnet:2014}
\begin{align*}
\int\parenth{\prod_{y\in\relatif}\omega_y^{R(T_n,y)}(1-\omega_y)^{L(T_n,y)}}\Pbb^\nu(\rmd\omega)
=\prod_{y\in\relatif}\int_0^1a^{R(T_n,y)}(1-a)^{L(T_n,y)}\nu(\rmd a)\enspace.
\end{align*}
Now, since $T_n$ is the hitting time of the state $n$, for any $y \geq n$, $L(T_n,y)=0$, and
\[L(T_n,y+1)=
\begin{cases}
 R(T_n,y),\ &\text{for all $y<0$}\\
 R(T_n,y)-1,\  &\text{for all $y\in [0,n-1]$}
\end{cases}
\enspace.\]
Hence the likelihood may be rewritten as
\[\Lcal_{\nu}\parenth{X_{0},\ldots,X_{T_n}}=\prod_{y\leqslant n-1}\int_0^1a^{L(T_n,y+1)+\indiacc{y\geqslant 0}}(1-a)^{L(T_n,y)}\nu(\rmd a)\enspace.\]
The collection $(L(T_n,y))_{y\leqslant n}$ is therefore a sufficient statistic that will be the building block of our estimation method. An important result of \cite{kesten:kozlov:spitzer:1975}, central in the analysis of the parametric case  derived in \cite{comets:falconnet:2014} is that, under the annealed distribution $\Pbf^\nu$, the process
$$(Z_y^n)_{0\leqslant y\leqslant n}=(L(T_n,n-y))_{0\leqslant y\leqslant n}$$
has the same distribution as $(Z_y)_{0\leqslant y\leqslant n}$ where $(Z_y)_{y \in \mathbb{N}}$ is a branching process with immigration in random environment (BPIRE) defined by
\begin{equation}
\label{eq:definition-Z-y^n}
Z_0=0
\quad \text{and} \quad
\forall y \in \mathbb{N}^* , \, Z_{y}=\sum_{i=0}^{Z_{y-1}}\xi_{y,i},
\end{equation}
where $\parenth{\xi_{y,i}}_{(y,i)\in\relatif_+^*\times\relatif_+}$ is a family of geometrically distributed independent random variables, \ie\ for all $(y,i)\in\relatif_+^*\times\relatif_+$ and for all $k\in\relatif_+$,
$$
\Pbf_\omega\parenth{\xi_{y,i}=k}=\omega_{y}(1-\omega_{y})^k\enspace.
$$
Under the annealed law $\Pbf^\nu$,  $(Z_x)_{x\in\relatif_+}$ is an homogeneous Markov chain starting at 0 with transition kernel \cite[Proposition 4.3]{comets:falconnet:2014}
\begin{align}\label{eq:ker1}
K^\nu(i,j)=\binom{i+j}{i}\int_0^1a^{i+1}(1-a)^{j}\nu(\rmd a)\eqsp.
\end{align}

\section{Estimator construction}
\label{sec:estimator-construction}
%
All along the paper, we assume that $\nu$ is absolutely continuous and has density $\tdens$ with respect to the Lebesgue measure.\

Let $\beta>0$, $L>0$ and $m= \sup\set{\ell \in \relatif_+}{\ell < \beta}$. The set $\Sigma(\beta,L)$ is the set of $m$ times differentiable functions $g : \intff{0}{1} \mapsto \reelp$ satisfying for all $(x,x') \in \intff{0}{1}^2$
\begin{equation}\label{eq:def-Holder}
\abs{g^{(m)}(x) - g^{(m)}(x')}
\leqslant L \abs{x-x'}^{\beta-m} \eqsp.
\end{equation}
The following section provides the construction of basic density estimators.

As suggested by \eqref{eq:ker1}, the moments of $\nu$ provide the natural information available for our estimation problem. Recovering a density function from its moments is a classical instance of the Hausdorff  moment problem; see \cite{akhiezer:1965}.

For any $j \in \relatif_+$, let $\mu_{j}[\nu]$ denote the $j$-th moment of the random environment distribution
\[
\mu_{j}[\nu] = \int_0^1 t^j \nu(\rmd t)=\int_0^1t^jf(t)\rmd t
 \eqsp.
\]
The moment problem is said to have unique solution if for two distributions $\nu$ and $\nu'$ over $\intoo{0}{1}$, the equations
$\mu_j[\nu]= \mu_j[\nu']$ for any $j\geqslant 1$ imply that $\nu= \nu'$. If the distribution $\nu$ satisfies this property, it is said to be moment determinate. It is shown that a sufficient condition for $\nu$ to be moment determinate is that the distribution $\nu$ on $(0,1)$ has a continuous density; see \cite{mnastakanov:ruymgaart:2003,mnatsakanov:2008}.

When $\nu$ is moment determinates, the operator mapping the c.d.f. of $\nu$ into $\po\mu_{j}[\nu]\pf_{j \in \relatif_+}$ is therefore invertible. Given $\param\in\relatif_+$, it has been shown in \cite{mnatsakanov:2008} that  its inverse can be approximated by
\begin{equation}\label{eq:DefKM}
\nu_M([0,x])= \sum_{k=0}^{\lfloor \param x\rfloor} \sum_{j=k}^\param \binom{\param}{j}\binom{j}{k} (-1)^{j-k} \mu_{j}[\nu] \eqsp.
\end{equation}
It is shown in \cite{mnatsakanov:2008} that $\lim_{M \to \infty}\nu_M([0,x]) = \nu([0,x])$ at every point $x \in (0,1)$ where the c.d.f. is continuous.
If $\nu$ has a density $f$, we consider then the following approximation, defined for any $x\in[0,1]$ by
\begin{equation}
\label{eq:estim-density}
\tdens^{\param}(x) = (\param+1) \left(\nu_\param\left(\left[0,x+\param^{-1}\right]\right)-\nu_\param\left(\left[0,x\right]\right)\right) \eqsp .
\end{equation}
\cite[Theorem~1]{mnatsakanov:2008} ensures that, when $f$ is continuous, $\tdens^{\param}$ converges uniformly to $\tdens$. More precisely, we establish in \cref{lem:ControleBias} that if $\tdens \in \Sigma(\beta,L)$ with $\beta \in \intof{0}{2}$ then for any $x \in \intff{0}{1}$
$$
\abs{\tdens(x) - \adens(x)} \leqslant \cstholder \param^{-\beta/2} \eqsp.
$$
In the following, we do not use the expression \eqref{eq:DefKM} to construct our estimator.
For any $(a,b)$ in $\reel_+^*\times\reel_+^*$,
benote by $\targetmoment^{a,b}$ the following beta-moments
\begin{equation}
 \label{eq:def-betamoment}
\targetmoment^{a,b} = \Ebb^\nu\crochet{\omega_0^a(1-\omega_0)^b} = \int_0^1 u^a (1-u)^b \tdens(u) \rmd u \eqsp
\end{equation}
where  $\betaker{a}{b}$ is the beta-kernel defined in \eqref{def:BetaKer}.
Using these notations, the approximation $\tdens^\param$ of the density may be written \cite[Eq~(12)]{mnatsakanov:2008}
\begin{align}
\label{eq:expr-approx-density}
\adens(x)
&= \int_0^1 \betaker{\lfloor \param x \rfloor + 1}{\param-\lfloor \param x \rfloor + 1}(u) \tdens(u) \rmd u
= (\param+1) \binom{\param}{\pent{\param x}} \targetmoment^{\pent{\param x},\param-\pent{\param x}} \eqsp .
\end{align}
When $a$ and $b$ are integers, the moments $\targetmoment^{a,b}$ can be consistently estimated as shown in \cite{diel:lerasle:2016}. Using the convention $0/0=0$, denote for all $(i,j) \in \relatif_+$
\begin{gather}\label{eq:def-phi}
\phi^{a,b}(i,j) = \frac{\binom{i+j-(a+b)}{i-a}}{\binom{i+j}{i}} \indic{\{i\geqslant a, j\geqslant b\}}
\quad  \text{ and } \quad
N_n^a = \sumforce{\etat=1}{n} \indic{\{Z_{\etat-1}^n\geqslant a\}} \eqsp ,\\
\label{eq:def-estim-moment}
\momentestim_n^{a,b} = \frac{1}{N_n^a}\sumforce{\etat=1}{n}\phi^{a,b}(Z_{\etat-1}^n,Z_\etat^n)\eqsp,
\end{gather}
where $(Z_\etat^n)$ is defined in \eqref{eq:definition-Z-y^n}.
Therefore, an estimator of $f(x)$ might be obtained by plugging an estimator of the beta moment in \eqref{eq:expr-approx-density}.
\begin{equation}\label{eq:def-estim-density}
\forall x \in \intff{0}{1},\qquad \edens_n^\param(x) =
(\param+1) \binom{\param}{\pent{\param x}} \momentestim_n^{\pent{\param x},\param-\pent{\param x}} \eqsp .
\end{equation}
Statistical properties of these estimators are provided in Theorem~\ref{Thm:NonAdaptive}.

\section{Main results}\label{sec:MainRes}
Let us first derive convergence rates of the basic estimators $\edens_n^\param$ for fixed values of the regularization parameter $\param$.

\begin{theorem}\label{Thm:NonAdaptive}
Assume that the distribution $\nu$ of the random environment  has a density $\tdens\in\Sigma(\beta,L)$ for some $\beta\in\intof{0}{2}$ and $L\geqslant 1$.
\begin{enumerate}[(i)]
\item Assume \Cref{assum:recurrent}. For any $M\geqslant 1$, there exists a constant $C_\nu$ such that
\begin{equation}\label{eq:NABRecurrent}
\Ebf^\nu\crochet{\norm{\infty}{\tdens - \edens_n^{M}}}
\leqslant C_{\nu}\parenth{L\param^{-\beta/2} +
M\frac{\log n}{\sqrt{n}}}\eqsp,
\end{equation}
\item Assume \Cref{assum:transient-to-right}. For any $M\geqslant 1$
\begin{equation}\label{eq:NABTransient}
\Ebf^\nu\crochet{\norm{\infty}{\tdens - \edens_n^{M}}}
\leqslant C_{\nu}\parenth{L\param^{-\beta/2} +
M^{1+\kappa}\sqrt{\frac{\log n}{n}}}\eqsp
\end{equation}
where $\kappa$ is defined in \Cref{assum:transient-to-right}.
\end{enumerate}
\end{theorem}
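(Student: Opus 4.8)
The plan is to bound the risk by a deterministic approximation error and a stochastic term through the triangle inequality
\[
\norm{\infty}{\tdens-\edens_n^{\param}}\leqslant\norm{\infty}{\tdens-\adens}+\norm{\infty}{\adens-\edens_n^{\param}}\eqsp.
\]
The first term is the bias and, for $\tdens\in\Sigma(\beta,L)$ with $\beta\in\intof{0}{2}$, it is controlled directly by \cref{lem:ControleBias}: it is at most $\cstholder\param^{-\beta/2}\leqslant C_\nu L\param^{-\beta/2}$, using $L\geqslant 1$ and the fact that $\cstholder$ depends linearly on $L$. This term is common to \eqref{eq:NABRecurrent} and \eqref{eq:NABTransient}, so the real content is to control the stochastic term in each regime.

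For the stochastic term, \eqref{eq:expr-approx-density} and \eqref{eq:def-estim-density} show that $\adens-\edens_n^{\param}$ evaluated at $x$ equals $(\param+1)\binom{\param}{\pent{\param x}}\bigl(\targetmoment^{\pent{\param x},\param-\pent{\param x}}-\momentestim_n^{\pent{\param x},\param-\pent{\param x}}\bigr)$, and as $x$ runs over $\intff{0}{1}$ the integer $\pent{\param x}$ runs over $\{0,\dots,\param\}$, so
\[
\norm{\infty}{\adens-\edens_n^{\param}}=(\param+1)\max_{0\leqslant a\leqslant\param}\binom{\param}{a}\,\abs{\momentestim_n^{a,\param-a}-\targetmoment^{a,\param-a}}\eqsp.
\]
Everything thus reduces to a maximal deviation bound for the rescaled beta-moment estimators $\binom{\param}{a}\momentestim_n^{a,\param-a}$, $0\leqslant a\leqslant\param$. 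Two facts make these quantities tractable. First, the Vandermonde identity $\sum_{a=0}^{\param}\binom{\param}{a}\binom{i+j-\param}{i-a}=\binom{i+j}{i}$ gives $0\leqslant\binom{\param}{a}\phi^{a,\param-a}(i,j)\leqslant 1$ for all $i,j$; in particular $\binom{\param}{a}\momentestim_n^{a,\param-a}\leqslant 1$ and $\binom{\param}{a}\targetmoment^{a,\param-a}\leqslant 1$. Second, a negative‑binomial series computation against the transition kernel $K^\nu$ of \eqref{eq:ker1} yields the conditional identity $\Ebf^\nu\crochet{\phi^{a,b}(Z_{k-1},Z_k)\mid Z_0,\dots,Z_{k-1}}=\targetmoment^{a,b}\indic{\{Z_{k-1}\geqslant a\}}$ for the BPIRE $(Z_y)$ of \eqref{eq:definition-Z-y^n}, to which the observed $(Z_y^n)$ are distributionally equal on $\{0,\dots,n\}$. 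Consequently, writing $\binom{\param}{a}(\momentestim_n^{a,\param-a}-\targetmoment^{a,\param-a})=(N_n^a)^{-1}\sum_{k=1}^{n}\binom{\param}{a}\bigl(\phi^{a,\param-a}(Z_{k-1}^n,Z_k^n)-\targetmoment^{a,\param-a}\indic{\{Z_{k-1}^n\geqslant a\}}\bigr)$, the numerator is a sum of martingale increments each bounded in absolute value by $1$, normalised by the random count $N_n^a$.

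From here I would apply a Freedman/Bernstein‑type martingale deviation inequality conditionally on $(N_n^a)_{0\leqslant a\leqslant\param}$, then a union bound over the values of $a$ and of $N_n^a$, obtaining $\binom{\param}{a}\abs{\momentestim_n^{a,\param-a}-\targetmoment^{a,\param-a}}\lesssim\sqrt{\log n/N_n^a}$ on an event of overwhelming probability and the crude bound $\leqslant 1$ on its complement. It then remains to control the effective counts $N_n^a$ uniformly in $a\leqslant\param$, and this is the heart of the matter and, in my view, the main obstacle. In the recurrent case one imports from \cite{comets:falconnet:2014,diel:lerasle:2016} that under \Cref{assum:recurrent} the BPIRE is recurrent with controlled tails, so that $N_n^a$ is of order $n$ up to polylogarithmic and polynomial‑in‑$\param$ corrections with very high probability; combined with the prefactor $(\param+1)$ this produces the rate $\param\log n/\sqrt n$ of \eqref{eq:NABRecurrent}. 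In the transient‑to‑the‑right case, Kesten's renewal theorem (as used in \cite{kesten:kozlov:spitzer:1975,diel:lerasle:2016}) gives the polynomial tail $\Pbf^\nu(Z_0\geqslant a)\asymp a^{-\kappa}$, so that $N_n^a$ is essentially of order $n\,a^{-\kappa}$; tracking this $\kappa$‑dependent loss through the maximum over $a$, through the fluctuations of $N_n^a$, and through the $\binom{\param}{a}$ rescaling is exactly what produces the exponent $\param^{1+\kappa}$ in \eqref{eq:NABTransient}, and obtaining this sharp power rather than a cruder one is the delicate point. Integrating the resulting tail bounds and collecting the bias and stochastic contributions yields \eqref{eq:NABRecurrent} and \eqref{eq:NABTransient}.
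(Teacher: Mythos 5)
Your proposal is correct and follows essentially the same route as the paper: the triangle inequality splits the risk into the bias, handled by \cref{lem:ControleBias}, and a stochastic term reduced to a uniform deviation bound for the rescaled beta-moment estimators combined with lower bounds on $N_n^{\param}$ imported from \cite{diel:lerasle:2016} (of order $n\param^{-\kappa}$ in the transient case and $n/2$ with failure probability $C_\nu\log n/\sqrt n$ in the recurrent case, the latter being the source of the extra $\sqrt{\log n}$ in \eqref{eq:NABRecurrent}). The only difference is that where the paper simply invokes \cite[Theorem 4]{diel:lerasle:2016} for the moment deviation, you re-derive it from the martingale structure of $\phi^{a,b}(Z_{k-1},Z_k)$ and the Vandermonde bound $\binom{\param}{a}\phi^{a,\param-a}\leqslant 1$, which is exactly the mechanism underlying that cited result.
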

Theorem~\ref{Thm:NonAdaptive} provides the first rates of convergence for  non-parametric density estimation of the random environment of a random walk. An important feature is that these rates are achieved by the same estimator in the recurrent and transient regimes. On the other hand, the value $\param_n$ optimizing these rates highly depends on both the regime of the walk and the regularity $\beta$ of $f$ that are both unknown in practice.
\begin{corollary}
\begin{enumerate}[(i)]
\item Assume \Cref{assum:recurrent}. Then, taking $\param_n = \lfloor \po\sqrt{n}/\log(n) \pf^{2/(\beta+2)} \rfloor$,
$$
\Ebf^\nu\crochet{\norm{\infty}{\tdens - \edens_n^{\param_n}}}
\leqslant
C_{\nu} \po \frac{\log(n)}{\sqrt{n}} \pf^{\beta/(\beta + 2)}\eqsp .
$$
\item Assume \Cref{assum:transient-to-right}. Then, taking $\param_n =  \po \lfloor n / \log(n) \pf^{\frac1{\beta + 2(1+\kappa)}} \rfloor$,,
$$
\Ebf^\nu\crochet{\norm{\infty}{\tdens - \edens_n^{\param_n}}}
\leqslant
C_{\nu}\po \frac{\log(n)}{n} \pf^{\frac{ \beta}{2(\beta + 2(1+\kappa))}} \eqsp.
$$
\end{enumerate}
\end{corollary}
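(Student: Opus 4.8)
The plan is to obtain the Corollary as an immediate consequence of \Cref{Thm:NonAdaptive}: in each regime I would substitute the prescribed value of $\param_n$ into the corresponding bound and verify that the deterministic (bias) contribution $L\param^{-\beta/2}$ and the stochastic contribution are of the same order — which, by construction, is the smallest possible value of the maximum of the two over admissible integers $\param\geq1$. So this is just the standard bias/variance trade-off and no new ingredient beyond the theorem is needed.

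First, in the recurrent case, starting from \eqref{eq:NABRecurrent} I would locate the (approximate) minimizer of $\param\mapsto L\param^{-\beta/2}+\param\log(n)/\sqrt n$ by equating the two terms: $\param^{-\beta/2}\asymp\param\log(n)/\sqrt n$ forces $\param^{1+\beta/2}\asymp\sqrt n/\log n$, i.e.\ $\param\asymp(\sqrt n/\log n)^{2/(\beta+2)}$, which is exactly $\param_n=\pent{(\sqrt n/\log n)^{2/(\beta+2)}}$. Substituting back, and using $2/(\beta+2)-1=-\beta/(\beta+2)$, both terms turn out to be of order $(\log n/\sqrt n)^{\beta/(\beta+2)}$, so the claimed bound follows after absorbing $L$ and the numerical factors into $C_\nu$.

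Next, in the transient case I would repeat the same optimization for \eqref{eq:NABTransient}: balancing $L\param^{-\beta/2}$ against $\param^{1+\kappa}\sqrt{\log(n)/n}$ gives $\param^{\beta/2+1+\kappa}\asymp\sqrt{n/\log n}$, hence $\param_n=\pent{(n/\log n)^{1/(\beta+2(1+\kappa))}}$. A one-line computation of the exponent, $\frac{1+\kappa}{\beta+2(1+\kappa)}-\frac12=-\frac{\beta}{2(\beta+2(1+\kappa))}$, then shows that the stochastic term is of order $(\log n/n)^{\beta/(2(\beta+2(1+\kappa)))}$; the bias term $\param_n^{-\beta/2}$ is of the same order, which is the asserted rate.

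I do not expect a genuine obstacle here. The only two points that need a line of care are: (a) checking $\param_n\geq1$ so that \Cref{Thm:NonAdaptive} is applicable, which holds for all $n\geq2$ since the un-floored expressions already exceed $1$ there; and (b) controlling the floor, for which I would use $\tfrac12 x\leq\pent{x}\leq x$ when $x\geq1$, so that $\param_n$ and the corresponding un-floored quantity agree up to a factor $2$ — an effect absorbed into $C_\nu$, which (as in the theorem) is allowed to depend on $\nu$, $\beta$ and $L$.
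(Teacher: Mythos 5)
Your proposal is correct and follows exactly the route the paper intends: the corollary is obtained by plugging the prescribed $\param_n$ into the bounds of \Cref{Thm:NonAdaptive} and checking that the bias term $L\param_n^{-\beta/2}$ and the stochastic term balance at the stated rate, with the floor and the condition $\param_n\geqslant 1$ handled as minor bookkeeping absorbed into $C_\nu$. The exponent computations in both regimes match the paper's choices, so nothing further is needed.
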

We will next propose a method to construct  an estimator which is adaptive to the regime (recurrence / transience) and the regularity of the density of the random environment. To that purpose, we follow the Goldenshluger-Lepski strategy \cite{goldenshluger:lepski:2008} to obtain a data-driven choice of the beta-moment order $\lepskiparam_n$.
In the proof of Theorem~\ref{Thm:NonAdaptive}, we build confidence regions for $\tdens$. More precisely, it follows from Eq.~\eqref{eq:DecBasique} and Lemmas~\ref{lem:ControleBias},~\ref{lem:ForLepski} that there exists an event $\Omega_0$ with probability larger than $1-n^{-2}$, such that for all $M\in\{1,\ldots,n-1\}$,
\begin{equation}
\label{eq:intervalle-confiance}
\norm{\infty}{\tdens - \edens_n^\param}\leqslant
31L\param^{-\beta/2} +
\frac{M+1}{N_n^{\param}}\sqrt{3n\log n}\eqsp .
\end{equation}
Define, for any $M\in \{1,\ldots,n-1\}$,
\begin{align*}
B(M)&=31L\param^{-\beta/2} \quad V_n(M)=\frac{M+1}{N_n^{\param}}\sqrt{3n\log n}\eqsp,\\
C_n(M)&=\sup_{M'\in \{1,\ldots,n-1\}}\accol{\norm{\infty}{\edens_n^M-\edens_n^{M\vee M'}}-2V_n(M')}\eqsp.
\end{align*}
Goldenshluger-Lepski's choice of $M$ is defined by
\begin{equation}\label{eq:GLEst}
 \lepskiparam_n\in \argmin{M\in \{1,\ldots,n-1\}}{C_n(M)+2V_n(M)}\eqsp.
\end{equation}
%
The estimator satisfies the following lemma, whose proof is given for completeness.
\begin{lemma}\label{lem:GL}
On the event $\Omega_{0}$, for any $M\in \{1,\ldots,n-1\}$
\[
\norm{\infty}{\tdens-\edens_n^{\lepskiparam_n}}\leqslant 5B(M)+5V_n(M)\eqsp.
\]
\end{lemma}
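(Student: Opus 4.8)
The plan is to run the standard Goldenshluger-Lepski oracle argument on the event $\Omega_0$, using the confidence bound \eqref{eq:intervalle-confiance} as the only probabilistic input. Throughout we work on $\Omega_0$, so every inequality below is deterministic. Fix $M \in \{1,\ldots,n-1\}$ and fix an arbitrary target point; since all norms are sup-norms over $x \in [0,1]$ the estimates will be uniform. The first step is a triangle-inequality decomposition: for any $M' \in \{1,\ldots,n-1\}$,
\[
\norm{\infty}{\tdens - \edens_n^{\lepskiparam_n}} \leqslant \norm{\infty}{\tdens - \edens_n^{M \vee M'}} + \norm{\infty}{\edens_n^{M \vee M'} - \edens_n^{M}} + \norm{\infty}{\edens_n^{M} - \edens_n^{\lepskiparam_n}}.
\]
Actually the cleaner route is to split through $\edens_n^{\lepskiparam_n \vee M}$: write
\[
\norm{\infty}{\tdens - \edens_n^{\lepskiparam_n}} \leqslant \norm{\infty}{\tdens - \edens_n^{\lepskiparam_n \vee M}} + \norm{\infty}{\edens_n^{\lepskiparam_n \vee M} - \edens_n^{M}} + \norm{\infty}{\edens_n^{M \vee \lepskiparam_n} - \edens_n^{\lepskiparam_n}},
\]
and bound the three terms separately.

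For the middle and last terms I would use the definition of $C_n(\cdot)$: by construction $\norm{\infty}{\edens_n^M - \edens_n^{M \vee M'}} \leqslant C_n(M) + 2V_n(M')$ for every $M'$, so choosing $M' = \lepskiparam_n$ gives a bound $C_n(M) + 2V_n(\lepskiparam_n)$ on the last term, and choosing $M' = M$ inside $C_n(\lepskiparam_n)$ gives $C_n(\lepskiparam_n) + 2V_n(M)$ on the middle term. The key structural step is then to control $C_n(M')$ for any $M'$: from \eqref{eq:intervalle-confiance}, for any $M_1 \leqslant M_2$ one has $\norm{\infty}{\edens_n^{M_1} - \edens_n^{M_2}} \leqslant \norm{\infty}{\tdens - \edens_n^{M_1}} + \norm{\infty}{\tdens - \edens_n^{M_2}} \leqslant B(M_1) + V_n(M_1) + B(M_2) + V_n(M_2)$; since $M' \leqslant M' \vee M''$ always and $B$ is nonincreasing in $M$ while $V_n$ need not be monotone, subtracting $2V_n(M'')$ and taking the supremum over $M''$ leaves $C_n(M') \leqslant B(M') + V_n(M') \leqslant B(M) + V_n(M) + \ldots$ — here one must be careful: $C_n(M') \leqslant \sup_{M''}\{B(M') + V_n(M') + B(M' \vee M'') + V_n(M' \vee M'') - 2V_n(M'')\}$, and the term $B(M' \vee M'') - V_n(M'') \leqslant B(M'') - V_n(M'') \leqslant \ldots$ still needs care. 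The honest bound that works is $C_n(M') \leqslant B(M') + V_n(M') + \sup_{M''}\{B(M' \vee M'') - V_n(M'')\}$, and since $B$ is nonincreasing $B(M' \vee M'') \leqslant B(M'')$, so the supremum is $\leqslant \sup_{M''}\{B(M'') - V_n(M'')\} \leqslant \sup_{M'' \leqslant M}\{B(M'')\}$ is not quite right either; the standard trick is simply $C_n(M') \leqslant B(M') + V_n(M')$ using that $B(M'\vee M'') \le B(M'')$ and pairing $B(M'')$ with... I would instead invoke the clean bound $C_n(M') \le 3B(M'')+ \ldots$; in the writeup I will carry out this bookkeeping carefully to get $C_n(M) \leqslant 2B(M) + V_n(M)$-type control and similarly $C_n(\lepskiparam_n) + 2V_n(\lepskiparam_n) \leqslant C_n(M) + 2V_n(M)$ by the minimality \eqref{eq:GLEst}.

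Combining: the first term $\norm{\infty}{\tdens - \edens_n^{\lepskiparam_n \vee M}}$ is bounded by \eqref{eq:intervalle-confiance} by $B(\lepskiparam_n \vee M) + V_n(\lepskiparam_n \vee M) \leqslant B(M) + V_n(\lepskiparam_n) + V_n(M)$ (using $B$ nonincreasing and $V_n(\lepskiparam_n \vee M) \leqslant V_n(\lepskiparam_n) + V_n(M)$, which itself follows from the explicit form $V_n(M) = \tfrac{M+1}{N_n^M}\sqrt{3n\log n}$ and $N_n^{M_1} \geqslant N_n^{M_2}$ for $M_1 \leqslant M_2$). Then use $C_n(\lepskiparam_n) + 2V_n(\lepskiparam_n) \leqslant C_n(M) + 2V_n(M) \leqslant B(M) + 3V_n(M)$ and collect all the $B(M)$ and $V_n(M)$ contributions; tracking constants gives at most $5B(M) + 5V_n(M)$, which is the claim. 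The main obstacle is precisely the bookkeeping in the middle paragraph — getting the $V_n$ terms to combine correctly given that $V_n$ is not monotone in $M$ (only $B$ is), and making sure the subadditivity $V_n(M \vee M') \le V_n(M) + V_n(M')$ is legitimately available from the monotonicity of $M \mapsto N_n^M$; once that is pinned down the rest is the routine GL calculation and the constant $5$ drops out of the three-term split plus the two $C_n$-comparisons.
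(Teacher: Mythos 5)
The overall shape is right (three-term split, two applications of the definition of $C_n$, one application of the minimality \eqref{eq:GLEst}), but there is a genuine gap: the bound on $C_n(M)$ — which is the one nontrivial step of the whole argument — is never actually established. You circle around it several times, admit each attempt "is not quite right", and end by promising to "carry out this bookkeeping carefully" later, aiming at a "$C_n(M)\leqslant 2B(M)+V_n(M)$-type control". The reason you cannot close it is a false premise: you assert twice that $V_n$ is not monotone in $M$. In fact $V_n(M)=\frac{M+1}{N_n^{M}}\sqrt{3n\log n}$ is nondecreasing, because $M\mapsto M+1$ is increasing and $M\mapsto N_n^{M}=\sum_{k=1}^n\indic{\{Z_{k-1}^n\geqslant M\}}$ is nonincreasing — a fact you yourself invoke a few lines later to get subadditivity of $V_n(\cdot\vee\cdot)$. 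Once you grant that $V_n$ is nondecreasing and $B$ is nonincreasing, the bound is immediate: for $M'\leqslant M$ the term inside the supremum defining $C_n(M)$ equals $-2V_n(M')\leqslant 0$, and for $M'>M$ the confidence bound \eqref{eq:intervalle-confiance} gives $\norm{\infty}{\edens_n^M-\edens_n^{M'}}-2V_n(M')\leqslant B(M)+B(M')+V_n(M)-V_n(M')\leqslant 2B(M)$. Hence $C_n(M)\leqslant 2B(M)$ on $\Omega_0$, for every $M$.

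A secondary problem is your choice of decomposition. Routing through $\edens_n^{\lepskiparam_n\vee M}$ as the \emph{first} intermediate forces you to bound $\norm{\infty}{\tdens-\edens_n^{\lepskiparam_n\vee M}}\leqslant B(M)+V_n(\lepskiparam_n\vee M)$, and the term $V_n(\lepskiparam_n\vee M)=V_n(\lepskiparam_n)\vee V_n(M)$ contributes an extra $V_n(\lepskiparam_n)$ that a single application of the minimality of $\lepskiparam_n$ does not absorb (minimality trades $C_n(\lepskiparam_n)+2V_n(\lepskiparam_n)$ for $C_n(M)+2V_n(M)$, and you have already spent those $V_n(\lepskiparam_n)$'s on the other two terms). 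The clean path is $\tdens\to\edens_n^{M}\to\edens_n^{M\vee\lepskiparam_n}\to\edens_n^{\lepskiparam_n}$: the first leg costs $B(M)+V_n(M)$ with no dependence on $\lepskiparam_n$, the second costs $C_n(M)+2V_n(\lepskiparam_n)$, the third costs $C_n(\lepskiparam_n)+2V_n(M)$, minimality turns $C_n(\lepskiparam_n)+2V_n(\lepskiparam_n)$ into $C_n(M)+2V_n(M)$, and $C_n(M)\leqslant 2B(M)$ then yields exactly $5B(M)+5V_n(M)$.
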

\begin{proof}
Denote $d(\tdens,g)= \norm{\infty}{f-g}$.
By repeated applications of the triangular inequality, one obtains, for any $M\in \{1,\ldots,n-1\}$, on $\Omega_0$,
\begin{align*}
 d(\tdens,\edens_n^{{\lepskiparam_n}})&\leqslant d(\tdens,\edens_n^{M})+d(\edens_n^{M},\edens_n^{M\vee{\lepskiparam_n}})+
 d(\edens_n^{M\vee{\lepskiparam_n}},\edens_n^{{\lepskiparam_n}})\\
 &\leqslant B(M)+V_n(M)+C_n(M)+2V_n({\lepskiparam_n})+C_n({\lepskiparam_n})+2V_n(M)\\
 &\leqslant B(M)+V_n(M)+C_n(M)+2V_n(M)+C_n(M)+2V_n(M)\\
 &\leqslant B(M)+5V_n(M)+2C_n(M)\enspace.
\end{align*}
Now, for any $M\in \{1,\ldots,n-1\}$, $B$ being non increasing and $V_n$ being non decreasing, on $\Omega_0$
\begin{align*}
 C_n(M)&\leqslant 0\vee \sup_{M'>M}\accol{d(\tdens,\edens_n^M)+d(\tdens,\edens_n^{M'})-2V_n(M')}\\
 &\leqslant0\vee \sup_{M'>M}\accol{ B(M)+B(M')+V_n(M)-V_n(M')}\leqslant 2B(M)\enspace.
\end{align*}
\end{proof}
Using \Cref{lem:GL} together with \eqref{eq:intervalle-confiance} yields the following Theorem:
\begin{theorem}\label{Thm:Adaptive}
Let $\beta\in\intof{0}{2}$ and $L\geqslant 1$.
Assume that $\tdens\in\Sigma(\beta,L)$ holds.
\begin{enumerate}[(i)]
\item Assume \Cref{assum:recurrent}. Then, there exists a constant $C_\nu$ such that
$$
\Pbf^\nu\parenth{\norm{\infty}{\tdens - \edens_n^{\lepskiparam_n}}
\leqslant
C_{\nu} \po \frac{\log(n)}{\sqrt{n}} \pf^{\beta/(\beta + 2)}}\geqslant 1-C_\nu\frac{\log n}{\sqrt n}\eqsp .
$$
\item Assume \Cref{assum:transient-to-right}. Then, there exists a constant $C_\nu$ such that
$$
\Ebf^\nu\crochet{{\norm{\infty}{\tdens - \edens_n^{\lepskiparam_n}}}}
\leqslant
C_{\nu}\po \frac{\log(n)}{n} \pf^{\frac{\beta}{2(\beta + 2(1+\kappa))}}\eqsp ,
$$
\end{enumerate}
\end{theorem}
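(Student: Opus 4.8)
The plan is to combine the oracle-type bound of \Cref{lem:GL} with the explicit form of the bias and variance terms $B(M)=31L\param^{-\beta/2}$ and $V_n(M)=\frac{M+1}{N_n^{\param}}\sqrt{3n\log n}$, after replacing the random quantity $N_n^{\param}$ by its deterministic order. On the event $\Omega_0$ we have, for \emph{every} admissible $M$, the bound $\norm{\infty}{\tdens-\edens_n^{\lepskiparam_n}}\leqslant 5B(M)+5V_n(M)$, so it suffices to plug in the value of $M$ that balances the two terms and to control $N_n^{\param}$ from below. The first step is therefore to recall from \cite{diel:lerasle:2016} (this is exactly the ingredient entering Lemma~\ref{lem:ForLepski} and the proof of Theorem~\ref{Thm:NonAdaptive}) the lower bounds on $N_n^{M}$: in the recurrent case $N_n^{M}\gtrsim \sqrt{n}$ up to logarithmic factors on an event of probability $1-O(n^{-2})$ (shrinking $\Omega_0$ if necessary), so that $V_n(M)\lesssim M\,\frac{\log n}{\sqrt n}$; in the transient-to-the-right case $N_n^{M}\gtrsim n/M^{\kappa}$ on a comparable event, so that $V_n(M)\lesssim M^{1+\kappa}\sqrt{\tfrac{\log n}{n}}$. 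These are precisely the stochastic bounds already used to derive \eqref{eq:NABRecurrent} and \eqref{eq:NABTransient}.

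In the recurrent case, on $\Omega_0$ we thus get $\norm{\infty}{\tdens-\edens_n^{\lepskiparam_n}}\lesssim L M^{-\beta/2}+M\,\frac{\log n}{\sqrt n}$ for all $M\in\{1,\dots,n-1\}$. Choosing $M=M_n=\lfloor(\sqrt n/\log n)^{2/(\beta+2)}\rfloor$ (the same choice as in the Corollary) balances the two terms and yields $\norm{\infty}{\tdens-\edens_n^{\lepskiparam_n}}\lesssim_\nu (\log n/\sqrt n)^{\beta/(\beta+2)}$; since this holds on $\Omega_0\cap\{N_n^{M_n}\gtrsim\sqrt n\}$, whose probability is at least $1-C_\nu\log n/\sqrt n$ (the dominant error coming from the lower deviation of $N_n^{M_n}$, cf.\ \cite{diel:lerasle:2016}), this is exactly statement~(i). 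For the transient case one argues with the expectation directly: from \Cref{lem:GL} on $\Omega_0$ and the deterministic bound $\norm{\infty}{\tdens-\edens_n^{M}}\leqslant \norm{\infty}{\tdens}+\norm{\infty}{\edens_n^{M}}$ on $\Omega_0^c$, together with the crude bound on $\norm{\infty}{\edens_n^{M}}$ coming from $\abs{\momentestim_n^{a,b}}\leqslant 1$ and $(\param+1)\binom{\param}{\pent{\param x}}\leqslant(\param+1)2^\param$, one writes
\[
\Ebf^\nu\crochet{\norm{\infty}{\tdens-\edens_n^{\lepskiparam_n}}}
\leqslant 5B(M)+5\,\Ebf^\nu\crochet{V_n(M)\indic{\Omega_0}}+ (\text{poly}(n)2^n)\,\Pbf^\nu(\Omega_0^c)\eqsp,
\]
and since $\Pbf^\nu(\Omega_0^c)\leqslant n^{-2}$ is not enough to kill the $2^n$ factor, one must instead use a genuinely exponential tail for $N_n^{M}$ being small (again available from \cite{diel:lerasle:2016}, or by redoing the argument with an event of probability $1-\rme^{-cn}$ for a slightly worse constant), so that the residual term is negligible. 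Taking $M=M_n=\lfloor(n/\log n)^{1/(\beta+2(1+\kappa))}\rfloor$ balances $L M^{-\beta/2}$ against $M^{1+\kappa}\sqrt{\log n/n}$ and gives the announced rate $(\log n/n)^{\beta/(2(\beta+2(1+\kappa)))}$.

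The main obstacle is the bookkeeping around the event $\Omega_0$ and the lower bound on $N_n^{\param}$: in part~(i) one only claims a high-probability statement, so it is enough to intersect $\Omega_0$ with $\{N_n^{M_n}\geqslant c_\nu\sqrt n/\log n\}$ and collect the error probabilities, the worst of which is $O(\log n/\sqrt n)$; but in part~(ii) one claims a bound in expectation, and since $\edens_n^{\lepskiparam_n}$ has no deterministic a priori sup-norm bound better than exponential in $n$, one genuinely needs the probability of the bad event to decay faster than any polynomial — ideally exponentially — which forces a slightly more careful choice of $\Omega_0$ than the one used for Theorem~\ref{Thm:NonAdaptive}. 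Once that exponential control is in place (and it is, since the relevant deviation inequalities in \cite{diel:lerasle:2016} are exponential), the rest is the routine bias-variance balancing displayed above.
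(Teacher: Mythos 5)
Your overall strategy coincides with the paper's: apply the oracle bound of \Cref{lem:GL} on $\Omega_0$, replace $N_n^{\param}$ by its deterministic order using the deviation bounds of \cite{diel:lerasle:2016} (a union bound over $M\leqslant M_{\max}\asymp(n/\log n)^{1/2\kappa}$ in the transient case, and $N_n^{\param}\geqslant N_n^{n}\geqslant n/2$ in the recurrent case), then balance $B(M)$ against $V_n(M)$ at the same $M_n$ as in the Corollary. Part~(i) is essentially the paper's argument; note only that your stated lower bound $N_n^{M}\gtrsim\sqrt n$ in the recurrent case is inconsistent with your own conclusion $V_n(M)\lesssim M\log n/\sqrt n$, which requires $N_n^{M}\gtrsim n/\sqrt{\log n}$; the correct ingredient is \cite[Lemma 10]{diel:lerasle:2016}, namely $N_n^{n}\geqslant n/2$ with probability $1-C_\nu\log n/\sqrt n$, and that error probability is indeed the dominant one in the final statement.

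The genuine gap is in part~(ii). You bound $\norm{\infty}{\edens_n^{M}}$ by $(\param+1)2^{\param}$ and correctly observe that $\Pbf^\nu(\Omega_0^c)\leqslant n^{-2}$ cannot absorb a factor $2^{n}$; but your proposed fix --- upgrading $\Omega_0$ to an event of probability $1-\rme^{-cn}$ --- does not work. The event $\Omega_0$ contains not only $\{N_n^{M}\ \text{large}\}$ but also the moment-concentration event of \Cref{lem:ForLepski}, whose failure probability is $2\rme^{-z}$ at the price of a deviation proportional to $\sqrt{n(z+\log\param)}$: taking $z\asymp n$ there turns $V_n(M)$ into a quantity of order $M^{1+\kappa}$, which destroys the rate. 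The paper's resolution is simpler and purely deterministic: since $\binom{a+b}{a}\phi^{a,b}(i,j)\leqslant 1$ by Vandermonde's identity, one gets $\binom{\param}{\pent{\param x}}\momentestim_n^{\pent{\param x},\param-\pent{\param x}}\leqslant 1$ and hence $\norm{\infty}{\edens_n^{M}}\leqslant M+1\leqslant n$ for every $M$ in the grid, so the contribution of the bad event to the expectation is $O(n\cdot n^{-2})=O(n^{-1})$, negligible against the announced rate. Without this (or an equivalent polynomial a priori bound on the estimator uniform in $M$) your argument for the bound in expectation does not close.
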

The estimator $\edens_n^{\lepskiparam_n}$ achieves simultaneously the optimal rates of Theorem~\ref{Thm:NonAdaptive} in every regime. It is in this sense adaptive to the regime and the regularity of $\tdens$.

The result holds in expectation in transient regimes and only with large probability in the recurrent regime. The reason is that the term $N_n^M$ cannot be handled in the same way in transient regimes where the Markov chain $(Z_k)$ admits a unique invariant probability and in recurrent regimes where it does not.

The rates do not match those one could reach if the environment $\omega$ is observed \cite{bertin:klutchnikoff:2010}. When the chain is recurrent, it visits typically more than $n$ times almost all sites  \cite[Lemma 10]{diel:lerasle:2016}. Therefore, one can probably estimate the environment itself and plug these estimators in any standard density estimator. This might improve the rates in the recurrent regime but it would certainly provide a poor estimator in transient regimes. The problem of recovering the distribution of the environment is in general harder if one observes only one trajectory of the walk than if the environment is observed directly. It is not clear whether a single estimator can recover minimax rates of the i.i.d. setting in the recurrent regime and our rates in transient regimes.


\section{Simulation Study}\label{sec:SimStud}

In this section, we illustrate our theoretical results. We  consider RWRE in different regimes (recurrent, transient ballistic and sub-ballistic). We also investigate the impact of the regularity of the density of the environment
The number of steps of the random walk that are observed depend on the regime. It follows from our discussion in \Cref{sec:setting}, that if the RWRE is transient to the right ($\Ebb^\nu[\log \rho_0] < 0$ where $\rho_0$ is defined in \eqref{eq:definition-rho-0}) then either
\begin{enumerate}
\item $T_n/n \to (1 + \Ebb^\nu[\rho_0])/(1-\Ebb^\nu[\rho_0])$ $\Pbf^\nu$-a.s. in the ballistic regime $\Ebb^\nu[\rho_0] < 1$, in which case the number of time steps grow linearly with $n$
\item $T_n/n \to \infty $ converges $\Pbf^\nu$-a.s.  in the sub-ballistic regime $\Ebb^\nu[\rho_0] \geq 1$, showing that the number of time steps grow superlinearly with $n$. In the sub-ballistic case, the fluctuations of $T_n$ are controlled by the parameter $\kappa$ defined in \eqref{eq:kappa}.
\end{enumerate}

We then compare the performance of our estimator in three sub-ballistic regime but with different $\kappa$. In both parts, we consider our estimation procedure for different values of the parameter $M$.

\subsection{Influence of the regularity}
In this section, we consider 3 different distributions for the environment.
\begin{itemize}
 \item The first example is the Beta distribution $B(3,3)$ whose density $\betaker{3}{3}$ is defined in \eqref{def:BetaKer}.
 \item The second example is the distribution with density
\begin{equation*}
x \mapsto 4 \left(1-\frac{\abs{2x-0.6}}{0.15}\right) \indic{\intff{0.225}{0.325}}(x)
+ \frac{14}{3} \left(1-\frac{\abs{2x-1.2}}{0.3}\right) \indic{\intff{0.45}{0.75}}(x)\eqsp .
\end{equation*}
\item The last example is the distribution with density
\begin{equation*}
x \mapsto 0.6 \betaker{3}{3}(2x) \indic{\ccint{0,0.5}}(x) + 1.4 \betaker{3}{3}(2x-1) \indic{\ocint{0.5,1}}(x)\eqsp .
\end{equation*}
\end{itemize}


These densities are plotted in black in Figure~\ref{BoxplotRegimes}. In the first example, the chain is recurrent, in the second and the third, it is sub-ballistic with a value of $\kappa$ respectively equal to $0.17$ and $0.57$.

\medskip

For each distribution of the environment, $100$ trajectories of the RWRE have been simulated until the chain reaches $100$.

In each case, our (piecewise constant) estimator $\edens_n^M$, defined in \eqref{eq:def-estim-density}, built for $M\in\{25,50,75\}$ is compared with the actual density $\tdens$.
More precisely, figure \ref{BoxplotRegimes} shows for each considered environment and for $\param$ in $\{25,50,75\}$ a graphic representation of the true density (solid black line) and in each point $x \in \intoo{0}{1}$ the median of the estimator $\edens_n^M(x)$ (dashed-dotted blue line), its interquartile range (greyed area) and its lower and upper hinges (dotted blue line).

According to our theoretical bounds, the estimator $\edens_n^M$ performs better when the true density is regular as in Examples 1 and 3 than in the non regular Example 2. The estimator $\edens_n^M$, based on a polynomial approximation, cannot properly approximate the two triangles shaped density of Example 2. More precisely, the regularization parameter $M$ should probably be much larger for the bias of the estimator to be small.

\begin{figure}
\includegraphics[scale=0.33]{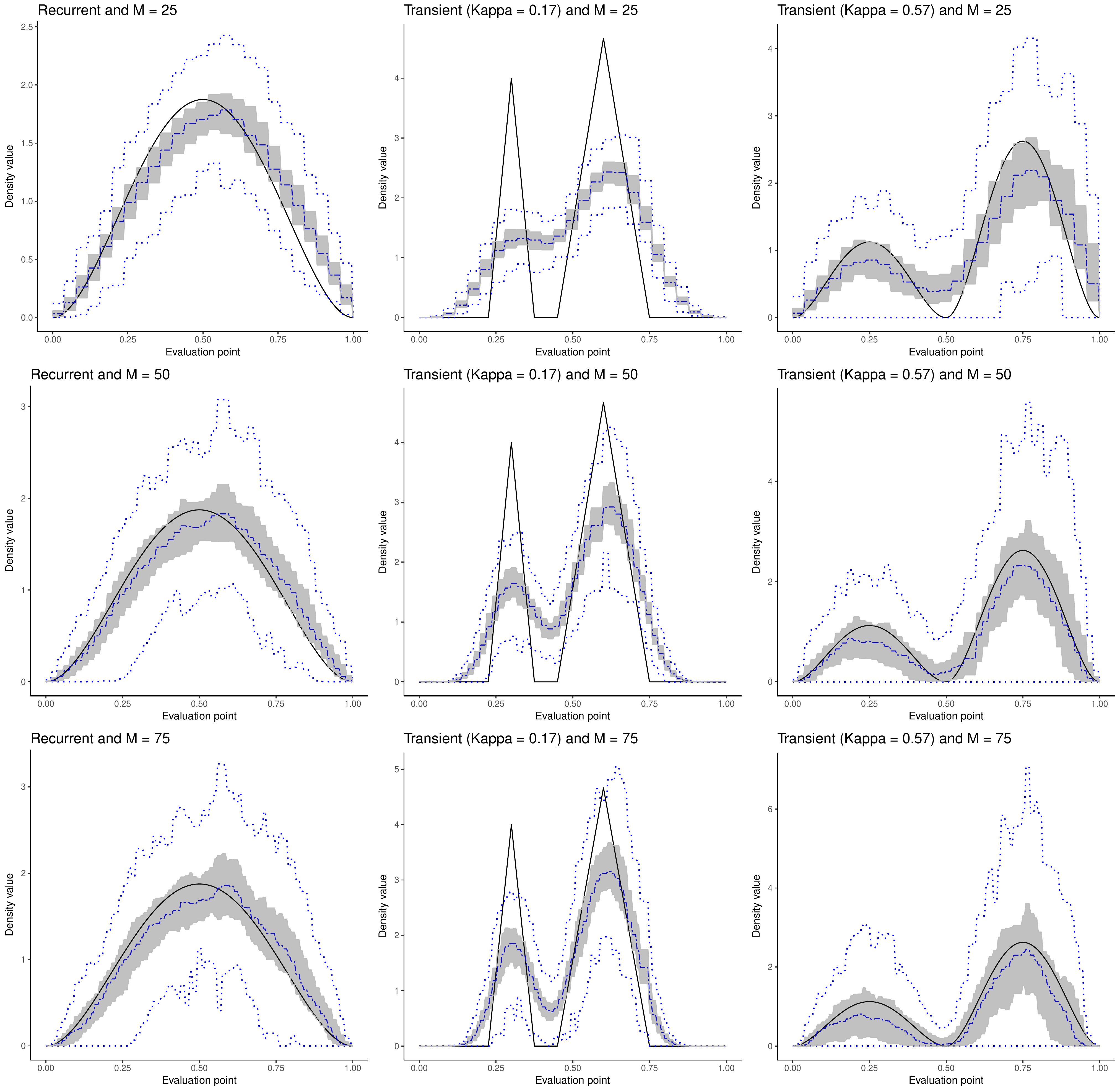}
\caption{\label{BoxplotRegimes}
True density (solid black line) and in each point $x \in \intoo{0}{1}$ the median of the estimator $\edens_n^M(x)$ (dashed-dotted blue line), its interquartile range (greyed area) and its lower and upper hinges (dotted blue line) of the estimator $\edens_{100}^\param$ for the three considered environment (one environment per column) and for $M\in \{25,50,75\}$.}
\end{figure}

\subsection{Influence of the regime}
We now consider trajectories from RWRE that all are transient to right and sub-ballistic.
The environment is drawn  from the distribution with density
\begin{equation}\label{def:density-bossestele}
x \mapsto 0.8 \betaker{3}{3}(0.5+2(x-c_1)) + 1.2 \betaker{3}{3}(0.5+2(x-c_2))
\end{equation}
and for three set of the parameters $c_1$ and $c_2$ such that the parameter $\kappa$ associated to the sub-ballistic RWRE takes values near the extreme values of this regime. In the following table, the chosen sets of parameters are provided with the corresponding (numerically approximated) values of $\kappa$.
\[
\begin{tabular}{|c|c|c|c|c|c|c|}
\hline
$\kappa$ & 0.04
& 0.35
& 1\\
\hline
$c_1$ & 0.27 & 0.3 & 0.38\\
\hline
$c_2$ & 0.67 & 0.7 & 0.7\\
\hline
\end{tabular}
\]

For each set of parameters for the environment distribution, 100 trajectories until the RWRE reaches $100$ for the first time have been generated.

Figure~\ref{fig:boxplot-estim-bossestele} illustrates the variability of the estimator $\edens_{100}^\param$. For the densities defined in~\ref{def:density-bossestele} with $\kappa$ in $\{0.04, 0.35, 1\}$, we plot the true density of the environment as well as
the interquartile range and lower and upper hinges of $\edens_{100}^M$  computed over the $100$ trajectories and for $M\in\{25,50,75\}$.\\

As expected from our theoretical results, the variability of the estimator deteriorates 
with $\kappa$. 
When $\kappa = 0.04$ or $\kappa=0.35$, the estimator shows better bias and worse variability when $\param$ grows. The situation in the nearly ballistic case ($\kappa=1$) is different : the walk reaches faster $n=100$ and the trajectory is thus shorter, one single trajectory reaching $n=100$ does not seem sufficient to obtain a reliable estimator of $\tdens$.

\begin{figure}
\includegraphics[scale=0.33]{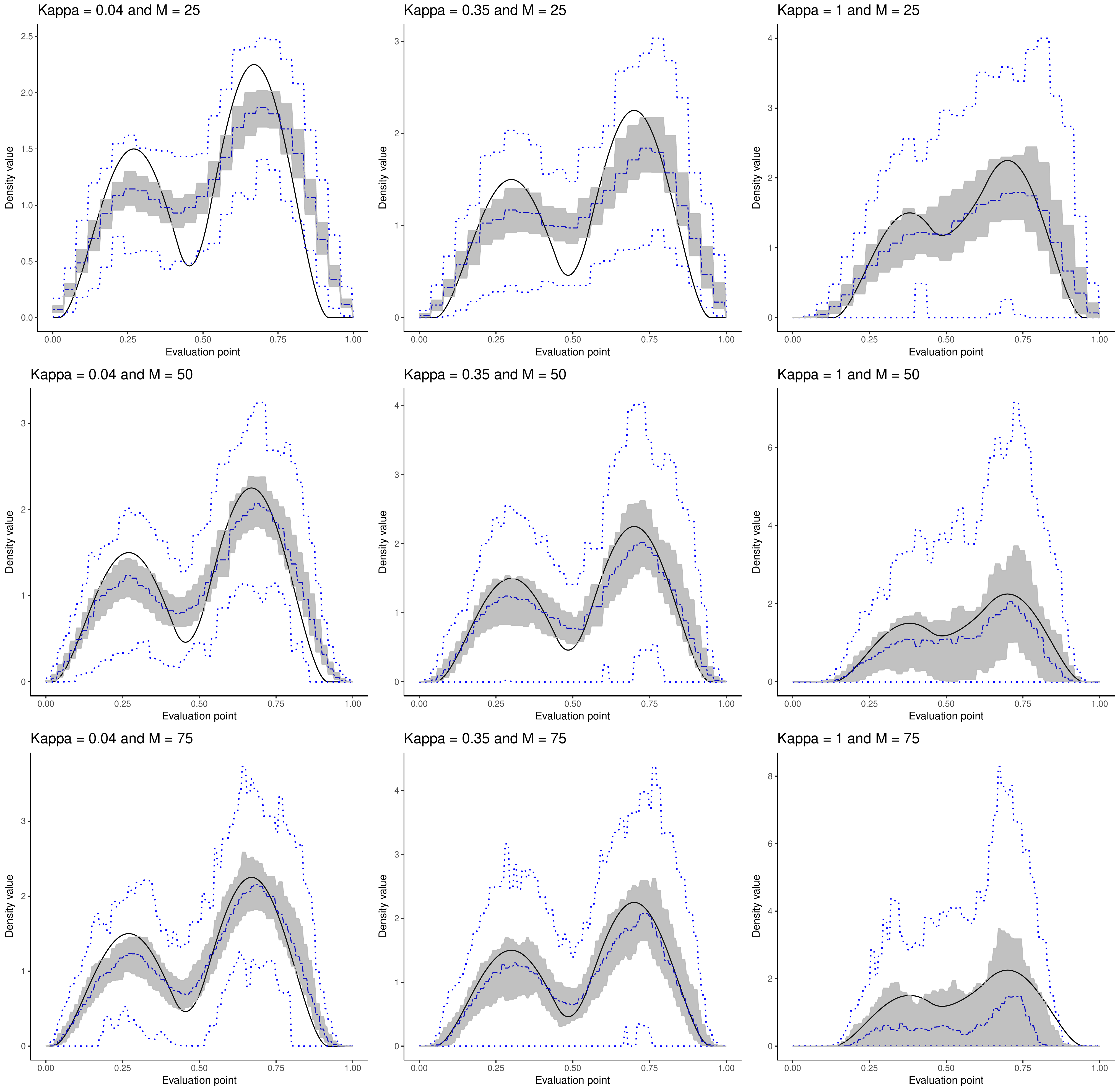}
\caption{\label{fig:boxplot-estim-bossestele}
True density (in black) and boxplot (median, first and third quartiles as well as lower and upper hinge) of the estimator $\edens_{100}^\param$ for three different values of parameter $\param$ ($25$, $50$ and $75$) and $\kappa$ ($0.04$, $0.35$ and $1$)
}
\end{figure}

\section{Proof}\label{sec:Proof}

\subsection{Proof of Theorem~\ref{Thm:NonAdaptive}}
By the triangular inequality
\begin{align}\label{eq:DecBasique}
\norm{\infty}{\tdens - \edens_n^\param}
& \leqslant
\norm{\infty}{\adens - \tdens} +\norm{\infty}{\adens - \edens_n^\param}\eqsp .
\end{align}
The first term in the right-hand side is deterministic and called the bias, the second is stochastic and called the variance. Lemma \ref{lem:ControleBias} provides an upper bound for the bias. Random bounds on the variance, valid in any regime, are provided in Lemma~\ref{lem:ForLepski}.  Lemma~\ref{lem:VarianceBasique} uses these random bounds to derive deterministic bounds on the variance that, once plugged in \eqref{eq:DecBasique} together with the result of Lemma \ref{lem:ControleBias} allow to conclude the proof of \eqref{eq:NABTransient} and \eqref{eq:NABRecurrent}. The choice of $\param_n$ optimizing the bounds is then immediate.
\begin{lemma}\label{lem:ControleBias}
Let $\beta\in\intof{0}{2}$ and $L\geqslant 1$. Assume that$\tdens\in\Sigma(\beta,L)$.  Then
\begin{equation}\label{eq:ControleBias}
\norm{\infty}{\tdens - \adens} \leqslant 31L\param^{-\beta/2} \eqsp.
\end{equation}
\end{lemma}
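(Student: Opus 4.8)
The plan is to estimate the bias $\adens(x) - \tdens(x)$ pointwise, uniformly in $x \in \intff{0}{1}$, starting from the integral representation
\[
\adens(x) = \int_0^1 \betaker{\pent{\param x}+1}{\param - \pent{\param x}+1}(u)\, \tdens(u)\, \rmd u
\]
given in \eqref{eq:expr-approx-density}. Write $k = \pent{\param x}$, so that $\betaker{k+1}{\param-k+1}$ is the density of a $\mathrm{Beta}(k+1,\param-k+1)$ law, whose mean is $(k+1)/(\param+2)$ and whose variance is of order $1/\param$. Since $\betaker{k+1}{\param-k+1}$ integrates to $1$, we have
\[
\adens(x) - \tdens(x) = \int_0^1 \betaker{k+1}{\param-k+1}(u)\,\bigl(\tdens(u) - \tdens(x)\bigr)\, \rmd u,
\]
so everything reduces to controlling how far a $\mathrm{Beta}$-weighted average of $\tdens$ around its mean $(k+1)/(\param+2)$ can be from $\tdens(x)$. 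There are two sources of error: first, the mean $(k+1)/(\param+2)$ is not exactly $x$ (it differs by $O(1/\param)$ since $|x - k/\param| < 1/\param$), and second, the $\mathrm{Beta}$ density has spread $O(\param^{-1/2})$.

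The key steps, in order: (1) Use the Hölder regularity $\tdens \in \Sigma(\beta,L)$ with $\beta \in \intof{0}{2}$. When $\beta \leqslant 1$ this directly gives $|\tdens(u) - \tdens(x)| \leqslant L|u-x|^\beta$; when $\beta \in \intoo{1}{2}$, a first-order Taylor expansion gives $|\tdens(u) - \tdens(x) - \tdens'(x)(u-x)| \leqslant L|u-x|^\beta$, and the linear term, when integrated against $\betaker{k+1}{\param-k+1}$, contributes $\tdens'(x)\bigl((k+1)/(\param+2) - x\bigr)$, which is $O(\param^{-1})$ and hence $O(\param^{-\beta/2})$ since $\beta < 2$. (2) For the remainder term in both cases, bound $\int_0^1 \betaker{k+1}{\param-k+1}(u)\,|u-x|^\beta\,\rmd u$. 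Split $|u-x|^\beta \leqslant 2^\beta(|u - \mu_k|^\beta + |\mu_k - x|^\beta)$ where $\mu_k = (k+1)/(\param+2)$; the second piece is $O(\param^{-\beta})$, and for the first piece use Jensen's inequality (since $\beta \leqslant 2$) to get $\int \betaker{k+1}{\param-k+1}(u)|u-\mu_k|^\beta\,\rmd u \leqslant \bigl(\mathrm{Var}(\mathrm{Beta}(k+1,\param-k+1))\bigr)^{\beta/2} = \bigl((k+1)(\param-k+1)/((\param+2)^2(\param+3))\bigr)^{\beta/2} \leqslant (4\param)^{-\beta/2}$ uniformly in $k$. (3) Collect the constants: all error terms are bounded by $C_{\beta}L\param^{-\beta/2}$ for an absolute constant $C_\beta$, and one checks that with $\beta \leqslant 2$ and $L \geqslant 1$ the numerical constant can be taken to be $31$. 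This last bookkeeping step is where \cite[Theorem~1]{mnatsakanov:2008} and \cite{bertin:klutchnikoff:2010} can be invoked to avoid recomputing sharp $\mathrm{Beta}$-moment bounds from scratch.

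The main obstacle is the treatment of the case $\beta \in \intoo{1}{2}$: one must verify that the contribution of the first-order Taylor term $\tdens'(x)(\mu_k - x)$ is genuinely $O(\param^{-\beta/2})$ and not merely $O(\param^{-1})$ with a constant depending on $\norm{\infty}{\tdens'}$ — this requires either bounding $\norm{\infty}{\tdens'}$ in terms of $L$ and the $\Sigma(\beta,L)$ structure (e.g. via an interpolation/Landau-type inequality on $\intff{0}{1}$, using that $\tdens \geqslant 0$) or absorbing it into $C_\beta L$ using $\param \geqslant 1$ and $\beta < 2$ so that $\param^{-1} \leqslant \param^{-\beta/2}$. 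A secondary nuisance is ensuring all the $\mathrm{Beta}$-variance and absolute-moment bounds are uniform in $k \in \{0,1,\dots,\param\}$, including the boundary cases $k = 0$ and $k = \param$ where the $\mathrm{Beta}$ density degenerates; there the density is monotone and the moment bounds are in fact easier, but they must be stated. Everything else is routine estimation.
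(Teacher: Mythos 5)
Your proposal is correct and follows essentially the same route as the paper's proof: the same representation of $\adens(x)-\tdens(x)$ as the expectation of $\tdens(B_{(\param,x)})-\tdens(x)$ for a Beta$(\pent{\param x}+1,\param-\pent{\param x}+1)$ variable, the same case split at $\beta=1$ with a Taylor/mean-value argument for $\beta\in\intof{1}{2}$, and the same mean and variance bounds $\abs{\esp{B_{(\param,x)}}-x}\leqslant 2/(\param+2)$ and $\var{B_{(\param,x)}}\leqslant 1/(4\param)$. The one obstacle you single out, namely bounding $\norm{\infty}{\tdens'}$ by an absolute multiple of $L$, is handled in the paper exactly as you suggest, via the interpolation bound $\norm{\infty}{\tdens'}\leqslant 7.2L$ from \cite[Theorem~1.1]{tsybakov:2009}; the only other (immaterial) difference is that for $\beta\leqslant 1$ the paper bounds $\esp{\abs{B_{(\param,x)}-x}}$ by a truncation plus Cauchy--Schwarz and Markov argument, whereas your direct Jensen bound through the variance is, if anything, slightly cleaner.
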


\begin{proof}
Let $x\in[0,1]$ and let $B_{(\param,x)}$ be a beta distributed random variable with parameters $\pent{\param x}+1$ and $\param-\pent{\param x}+1$ on a probability space $\left(\Omega,\mathcal{A},\Pbb\right)$. By definition,
\begin{align*}
\adens(x) - \tdens(x)
& =
\int_0^1 \tdens(u) \betaker{\pent{\param x}+1}{\param-\pent{\param x}+1}(u) \mathrm{d}u - \tdens(x)
=
\esp{\tdens(B_{(\param,x)}) - \tdens(x)} \eqsp.
\end{align*}
%
Consider first the case $0 < \beta \leqslant 1$.
Since $f$ is $(\beta,L)$-Hölder, by Jensen's inequality,
$$
\abs{\adens(x) - \tdens(x)}
\leqslant
\esp{\abs{\tdens(B_{(\param,x)}) - \tdens(x)}}
\leqslant
L\esp{\abs{B_{(\param,x)} - x}^\beta}
\leqslant
L\esp{\abs{B_{(\param,x)} - x}}^\beta \eqsp .
$$
For any $\eta > 0$, it holds
\begin{equation}\label{eq:BoundBias1}
 \esp{\abs{B_{(\param,x)} - x}}
\leqslant
\eta + \esp{\abs{B_{(\param,x)} - x}\indic{\{\abs{B_{(\param,x)} - x}>\eta\}}} \eqsp .
\end{equation}
By Cauchy-Schwarz and Markov inequalities,
\begin{align}
\notag\esp{\abs{B_{(\param,x)} - x}\indic{\{\abs{B_{(\param,x)} - x}>\eta\}}}&
\leqslant \sqrt{\esp{\parenth{B_{(\param,x)} - x}^2}\prob{\abs{B_{(\param,x)} - x}>\eta\}}}\\
\label{eq:inequality-CauchyMarkovBias}
&\leqslant
\sqrt{\po \var{B_{(\param,x)}} + \po \esp{B_{(\param,x)}} - x \pf^2 \pf \frac{\var{B_{(\param,x)}}}{\eta^2}}\eqsp .
\end{align}
One has
\begin{equation*}
\esp{B_{(\param,x)}}
=
\frac{\pent{\param x}+1}{\param+2} \quad \text{ and } \quad \var{B_{(\param,x)}}
=
\frac{(\pent{\param x}+1)(M-\pent{\param x}+1)}{(\param+2)^2(\param+3)}
\eqsp \eqsp .
\end{equation*}
Therefore,
\begin{equation}\label{eq:inequality-esp-beta}
\abs{\esp{B_{(\param,x)}} - x} \leqslant \frac{2}{M+2} \quad \text{ and } \quad \var{B_{(\param,x)}}
\leqslant
\frac{1}{4 \param} \eqsp .
\end{equation}
Plugging \eqref{eq:inequality-esp-beta} in \eqref{eq:inequality-CauchyMarkovBias} yields:
\begin{align*}
\esp{\abs{B_{(\param,x)} - x}\indic{\{\abs{B_{(\param,x)} - x}>\eta\}}}
\leqslant
\sqrt{\po \frac{1}{4\param} + \frac{4}{\param^2} \pf \frac{1}{4 M \eta^2}}
\leqslant
\frac{1}{\eta} \frac{\sqrt{17}}{4 M} \eqsp .
\end{align*}
Plugging this bound in \eqref{eq:BoundBias1} and optimizing in $\eta > 0$ yields
$$
\abs{\adens(x) - \tdens(x)}
\leqslant
17^{\beta/4} L \, \param^{-\beta/2} \eqsp .
$$

Consider then the case $1 < \beta \leqslant 2$. Since $\tdens$ is differentiable, by the mean-value theorem, there exists a random variable $\tilde{B}_{(\param,x)}$ such that
$$
\abs{\tilde{B}_{(\param,x)} - x} \leqslant \abs{B_{(\param,x)} - x}
\quad \text{and} \quad
\tdens(B_{(\param,x)}) - \tdens(x)=f'(\tilde{B}_{(\param,x)}) (B_{(\param,x)} - x)\enspace.
$$
Therefore
\begin{align*}
\abs{\adens(x) - \tdens(x)}
& = \abs{\esp{f'(x) (B_{(\param,x)} - x) + \po f'(\tilde{B}_{(\param,x)})-f'(x)\pf (B_{(\param,x)} - x)}}\\
& \leqslant
\abs{f'(x)} \abs{\esp{B_{(\param,x)}} - x} + \esp{\abs{f'(\tilde{B}_{(\param,x)})-f'(x)} \abs{B_{(\param,x)} - x}} \eqsp .
\end{align*}
Using that $f$ is $(\beta,L)$-Hölder and \eqref{eq:inequality-esp-beta}, one gets
$$
\abs{\adens(x) - \tdens(x)}
\leqslant
\norm{\infty}{f'} \frac{2}{M} + L\esp{\abs{B_{(\param,x)} - x}^\beta} \eqsp .
$$
Furthermore,
\begin{align*}
\esp{\abs{B_{(\param,x)} - x}^\beta}
& \leqslant
\esp{\abs{B_{(\param,x)} -\esp{B_{(\param,x)}} +\esp{B_{(\param,x)}} - x}^\beta}\\
& \leqslant
2^{\beta-1} \po \esp{\abs{B_{(\param,x)} -\esp{B_{(\param,x)}}}^\beta} +\abs{\esp{B_{(\param,x)}} - x}^\beta \pf \eqsp .
\end{align*}
By \eqref{eq:inequality-esp-beta},
\begin{align*}
\esp{\abs{B_{(\param,x)} - x}^\beta}
&\leqslant
2^{\beta-1} \po \var{B_{(\param,x)}}^{\beta/2} +\abs{\esp{B_{(\param,x)}} - x}^\beta \pf\\
& \leqslant
2^{\beta-1} \po \frac{2^{\beta/2}}{\param^{\beta/2}} +\frac{2^\beta}{\param^\beta} \pf
\leqslant
\frac{4^\beta}{\param^{\beta/2}} \eqsp .
\end{align*}
By \cite[Theorem~1.1]{tsybakov:2009},
$$
\norm{\infty}{f'}\leqslant \frac{3}{\sqrt{4-\sqrt{13}}} \; \frac{L+\beta+1}{\beta+1} \leqslant 7.2 L\eqsp .
$$
It thus finally holds:
$$
\abs{\adens(x) - \tdens(x)}
\leqslant
31L \param^{-\beta/2}\eqsp.
$$
As this result holds for all $x\in[0,1]$, the proof is complete.
\end{proof}
Let us now turn to the random part of the risk. We start with a random upper bound.
\begin{lemma}\label{lem:ForLepski}
 Assuming \Cref{assum:transient-to-right} or \Cref{assum:recurrent} , then, for any $z>0$,
  \[
\Pbf^\nu\parenth{\norm{\infty}{\adens - \edens_n^\param }>\frac{M+1}{N_n^{\param}}\sqrt{\frac{n(z+\log \param)}{2}}}\leqslant 2e^{-z}\enspace.
 \]

\end{lemma}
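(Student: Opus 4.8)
\emph{Proof idea.} The plan is to reduce the supremum over $x\in\intff{0}{1}$ to a maximum over finitely many beta--moments, to recognise each rescaled moment error as a martingale with bounded increments, and to apply the Azuma--Hoeffding inequality followed by a union bound. First I would note that for $x\in\intff{0}{1}$, setting $a=\pent{\param x}$ and $b=\param-a$, relations \eqref{eq:expr-approx-density} and \eqref{eq:def-estim-density} give $\edens_n^\param(x)-\adens(x)=(\param+1)\binom{\param}{a}\bigl(\momentestim_n^{a,b}-\targetmoment^{a,b}\bigr)$; since $a$ runs only through $\{0,1,\dots,\param\}$ as $x$ varies, one gets $\norm{\infty}{\adens-\edens_n^\param}=\max_{0\leqslant a\leqslant\param}(\param+1)\binom{\param}{a}\bigl|\momentestim_n^{a,\param-a}-\targetmoment^{a,\param-a}\bigr|$. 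By the definitions \eqref{eq:def-phi}--\eqref{eq:def-estim-moment}, and since $N_n^a=\sum_{\etat=1}^n\indic{\{Z_{\etat-1}^n\geqslant a\}}$, one has
\[
N_n^a\bigl(\momentestim_n^{a,b}-\targetmoment^{a,b}\bigr)=\sum_{\etat=1}^n\Bigl(\phi^{a,b}(Z_{\etat-1}^n,Z_\etat^n)-\targetmoment^{a,b}\,\indic{\{Z_{\etat-1}^n\geqslant a\}}\Bigr).
\]

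Next I would set up the martingale. Because $(Z_\etat^n)_{0\leqslant\etat\leqslant n}$ has the same distribution as the homogeneous Markov chain $(Z_\etat)_{0\leqslant\etat\leqslant n}$ with kernel $K^\nu$ of \eqref{eq:ker1}, it suffices to prove the announced bound with $Z_\etat^n$ replaced by $Z_\etat$. The key computation is that, for integers $0\leqslant a\leqslant i$ and $b\geqslant 0$,
\[
\sum_{j\geqslant 0}\phi^{a,b}(i,j)\,K^\nu(i,j)=\int_0^1 u^{a}(1-u)^{b}\,\tdens(u)\,\rmd u=\targetmoment^{a,b},
\]
obtained by cancelling the $\binom{i+j}{i}$ factors, substituting $j=b+\ell$, and summing $\sum_{\ell\geqslant 0}\binom{i-a+\ell}{\ell}(1-u)^{\ell}=u^{-(i-a+1)}$; and $\phi^{a,b}(i,j)=0$ for $i<a$. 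Hence, with $\mathcal F_\etat=\sigma(Z_0,\dots,Z_\etat)$, $\Ebf^\nu[\phi^{a,b}(Z_{\etat-1},Z_\etat)\mid\mathcal F_{\etat-1}]=\targetmoment^{a,b}\,\indic{\{Z_{\etat-1}\geqslant a\}}$, so for each fixed $a$ the process
\[
S_\etat^{(a)}=(\param+1)\binom{\param}{a}\sum_{l=1}^{\etat}\Bigl(\phi^{a,\param-a}(Z_{l-1},Z_l)-\targetmoment^{a,\param-a}\,\indic{\{Z_{l-1}\geqslant a\}}\Bigr),\qquad \etat=0,\dots,n,
\]
is an $(\mathcal F_\etat)$-martingale, with $S_n^{(a)}=N_n^a\,(\param+1)\binom{\param}{a}\bigl(\momentestim_n^{a,\param-a}-\targetmoment^{a,\param-a}\bigr)$ on $\{N_n^a>0\}$.

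I would then bound the increments and conclude. Vandermonde's identity gives $\binom{i+j}{i}=\sum_{c}\binom{\param}{c}\binom{i+j-\param}{i-c}\geqslant\binom{\param}{a}\binom{i+j-\param}{i-a}$ (all summands nonnegative), hence $0\leqslant(\param+1)\binom{\param}{a}\phi^{a,\param-a}(i,j)\leqslant\param+1$ for all $i,j$; so, conditionally on $\mathcal F_{\etat-1}$, the increment $S_\etat^{(a)}-S_{\etat-1}^{(a)}$ lies in an interval of length at most $\param+1$. Azuma--Hoeffding then yields $\Pbf^\nu(|S_n^{(a)}|\geqslant t)\leqslant 2\exp\bigl(-2t^2/(n(\param+1)^2)\bigr)$ for all $t>0$, and with $t_\star=(\param+1)\sqrt{n(z+\log\param)/2}$ the right-hand side is at most $2e^{-z}/\param$. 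Since $a\mapsto\indic{\{Z_{\etat-1}^n\geqslant a\}}$ is non-increasing, $N_n^a\geqslant N_n^\param$ for $a\leqslant\param$, so on $\{N_n^\param>0\}$
\[
\norm{\infty}{\adens-\edens_n^\param}=\max_{0\leqslant a\leqslant\param}\frac{|S_n^{(a)}|}{N_n^a}\leqslant\frac{1}{N_n^\param}\max_{0\leqslant a\leqslant\param}|S_n^{(a)}|,
\]
whence the event in the statement is contained in $\{\max_{0\leqslant a\leqslant\param}|S_n^{(a)}|>t_\star\}$ (and is empty when $N_n^\param=0$); a union bound over the values of $a$ then gives probability at most $2e^{-z}$.

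The main obstacle I anticipate is the deterministic input on $\phi^{a,b}$: the unbiasedness identity $\sum_j\phi^{a,b}(i,j)K^\nu(i,j)=\targetmoment^{a,b}$ (the negative--binomial summation above), which is what makes $S^{(a)}$ a mean-zero martingale, together with the combinatorial bound $(\param+1)\binom{\param}{a}\phi^{a,\param-a}\leqslant\param+1$ — it is essential to get $\param+1$ here rather than the crude $(\param+1)\binom{\param}{a}$, since precisely this quantity fixes the Azuma radius and hence the factor $\param+1$ in the statement. Both are elementary but must be done carefully; everything else — the time-reversal reduction $(Z_\etat^n)\stackrel{d}{=}(Z_\etat)$, the monotonicity $N_n^a\geqslant N_n^\param$, and the union bound — is routine.
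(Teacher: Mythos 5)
Your argument is correct, and its skeleton (write $\adens(x)-\edens_n^\param(x)=(\param+1)\binom{\param}{\pent{\param x}}\bigl(\targetmoment^{\pent{\param x},\param-\pent{\param x}}-\momentestim_n^{\pent{\param x},\param-\pent{\param x}}\bigr)$, use $N_n^{\pent{\param x}}\geqslant N_n^{\param}$, and take a union bound over the finitely many values of $\pent{\param x}$) is exactly the paper's. The difference is in the middle step: the paper simply invokes \cite[Theorem 4]{diel:lerasle:2016} for the pointwise deviation of $\momentestim_n^{a,b}$, whereas you re-derive that deviation bound from scratch — the unbiasedness identity $\sum_j\phi^{a,b}(i,j)K^\nu(i,j)=\targetmoment^{a,b}$ via the negative-binomial series, the martingale $S^{(a)}_\etat$ with predictable compensator $\targetmoment^{a,b}\indic{\{Z_{\etat-1}\geqslant a\}}$, the Vandermonde bound $(\param+1)\binom{\param}{a}\phi^{a,\param-a}\leqslant\param+1$ fixing the Azuma radius, and Azuma--Hoeffding. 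This is essentially a reconstruction of the proof of the cited external result, so your route buys self-containedness and makes transparent where the factor $(\param+1)/N_n^{\param}$ and the $\sqrt{n(z+\log\param)/2}$ threshold come from, at the cost of length; the paper's route is shorter but opaque without the reference. One shared cosmetic point: since $\pent{\param x}$ takes $\param+1$ values on $\intff{0}{1}$, the union bound with $\log\param$ inside the square root strictly yields $2(1+1/\param)\rme^{-z}$ rather than $2\rme^{-z}$ (replace $\log\param$ by $\log(\param+1)$ to be exact); the paper's own proof has the same harmless slack, and it is absorbed by the constants $C_\nu$ downstream, so I would not count it against you.
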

\begin{proof}
By definition, one has
\[
\adens(x) - \edens_n^\param(x)=(\param+1) \binom{\param}{\pent{\param x}}\parenth{ \targetmoment^{\pent{\param x},\param-\pent{\param x}}- \momentestim_n^{\pent{\param x},\param-\pent{\param x}} }
\]
 By \cite[Theorem 4]{diel:lerasle:2016}, for any $z>0$,
 \[
\Pbf^\nu\parenth{\abs{ \targetmoment^{\pent{\param x},\param-\pent{\param x}}- \momentestim_n^{\pent{\param x},\param-\pent{\param x}} }>\frac{n}{N_n^{\pent{\param x}}}\binom{\param}{\pent{\param x}}^{-1}\sqrt{\frac{z}{2n}}}\leqslant 2e^{-z}\enspace.
 \]
It remains to remark that $N_n^{\pent{\param x}}\geqslant N_n^{\param}$ and to apply a union bound to conclude the proof.
\end{proof}
The following lemma provides deterministic rates of convergence that derive from Lemma~\ref{lem:ForLepski}.
\begin{lemma}\label{lem:VarianceBasique}
Under \Cref{assum:transient-to-right}, $X$ is transient and
  \[
\Ebf^\nu\crochet{\norm{\infty}{\adens - \edens_n^\param }}\leqslant C_{\nu}M^{1+\kappa} \sqrt{\frac{\log n}{n}}\eqsp.
 \]
Under \Cref{assum:recurrent}, $X$ is recurrent and
  \[
\Ebf^\nu\crochet{\norm{\infty}{\adens - \edens_n^\param }}\leqslant C_{\nu}M\frac{\log n}{\sqrt{n}}\eqsp.
 \]
%
\end{lemma}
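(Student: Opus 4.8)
The plan is to start from the random bound of Lemma~\ref{lem:ForLepski} and integrate the tail, the only nontrivial ingredient being a lower bound on the occupation count $N_n^{\param}$ that is robust enough to hold in both regimes. First I would fix $z = 2\log n$ (or a similar choice) in Lemma~\ref{lem:ForLepski}, so that on an event of probability at least $1 - 2n^{-2}$ one has
\[
\norm{\infty}{\adens - \edens_n^\param}
\leqslant
\frac{M+1}{N_n^{\param}}\sqrt{\tfrac{n(2\log n + \log M)}{2}}
\leqslant
\frac{M+1}{N_n^{\param}}\sqrt{2n\log n}\eqsp,
\]
using $M \leqslant n$. On the complementary event I would bound $\norm{\infty}{\adens - \edens_n^\param}$ crudely: $\adens$ is bounded (being a beta-moment times $(\param+1)\binom{\param}{\pent{\param x}}$, hence $O(\sqrt M)$ uniformly in $x$ by Stirling, or simply by $\norm{\infty}{\tdens}+31L$ via Lemma~\ref{lem:ControleBias}), and $\edens_n^\param$ is deterministically bounded by $(\param+1)\binom{\param}{\pent{\param x}} \leqslant$ some polynomial in $n$, so this event contributes at most $\mathrm{poly}(n)\cdot n^{-2}$, which is negligible. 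Thus the whole problem reduces to controlling $\Ebf^\nu[1/N_n^{\param}]$, or rather $\Ebf^\nu[(M+1)/N_n^{\param}]$, up to the $\sqrt{n\log n}$ factor.

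The heart of the matter is therefore the behaviour of $N_n^{\param} = \sum_{\etat=1}^n \indic{\{Z_{\etat-1}^n \geqslant \param\}}$, the number of indices $\etat\le n$ at which the BPIRE exceeds the threshold $\param$. Here the two regimes genuinely differ. In the transient-to-right case (\Cref{assum:transient-to-right}), the chain $(Z_k)$ with kernel $K^\nu$ is positive recurrent with a unique invariant law $\pi$, and by \cite{diel:lerasle:2016} the tail $\pi([\param,\infty))$ decays like $\param^{-\kappa}$; a law-of-large-numbers / renewal argument then gives $N_n^{\param} \gtrsim n\,\param^{-\kappa}$ with high probability, so $(M+1)/N_n^{\param} \lesssim M^{1+\kappa}/n$ and multiplying by $\sqrt{n\log n}$ yields the claimed $C_\nu M^{1+\kappa}\sqrt{(\log n)/n}$. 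In the recurrent case (\Cref{assum:recurrent}), there is no invariant probability, but the site $0$ (equivalently, small values of $Z$) is visited $\Theta(n)$ times by time $n$ — this is essentially \cite[Lemma~10]{diel:lerasle:2016} — while intermediate levels up to $\param$ are reached with probability bounded below by a constant depending only on $\nu$; this gives a weaker lower bound, of order $N_n^{\param} \gtrsim \sqrt n / M$ or $n/M^{c}$ up to logarithmic factors, producing the extra $M (\log n)/\sqrt n$ rate. In both cases I would invoke the relevant occupation-time estimates of \cite{diel:lerasle:2016} rather than reprove them.

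The step I expect to be the main obstacle is exactly this lower bound on $N_n^{\param}$ — specifically, turning the high-probability lower bound into a bound on $\Ebf^\nu[(M+1)/N_n^{\param}]$, since $1/N_n^{\param}$ can be as large as $1$ (or even undefined, handled by the $0/0=0$ convention) on the small-probability event where the chain fails to climb to level $\param$ enough times. The clean way is to split the expectation over the good event (where the deterministic lower bound on $N_n^{\param}$ applies) and its complement, bounding $(M+1)/N_n^{\param}$ on the bad event by $M+1$ (or by $0$ when $N_n^\param=0$) and using that the bad event has probability $\lesssim n^{-2}$, which is summable against the polynomial-in-$n$ prefactors. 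Once $\Ebf^\nu[(M+1)/N_n^{\param}]$ is controlled by $C_\nu M^{\kappa}/n$ (transient) resp.\ $C_\nu M (\log n)^{c}/\sqrt n$ (recurrent), combining with the $\sqrt{n\log n}$ factor from the first paragraph and absorbing constants and logarithms into $C_\nu$ finishes the proof.
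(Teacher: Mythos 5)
Your transient case follows the paper's own route and is sound in outline: combine Lemma~\ref{lem:ForLepski} with the high-probability lower bound $N_n^{\param}\gtrsim n\param^{-\kappa}$ from \cite[Lemmas 8 and 9]{diel:lerasle:2016} (exception probability exponentially small in $n/\param^{2\kappa}$), absorb the exceptional event via the uniform bound $\norm{\infty}{\adens}\vee\norm{\infty}{\edens_n^{\param}}\leqslant \param+1$, and note the bound is trivial for $\param\geqslant n^{1/2\kappa}$; whether you fix $z\asymp\log n$ or integrate the tail as the paper does is immaterial.

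The recurrent case, however, contains a genuine gap. The occupation estimate available there is not of the form $N_n^{\param}\gtrsim \sqrt n/\param$ that you posit: \cite[Lemma 10]{diel:lerasle:2016} (inequality \eqref{eq:NnMRec}) gives the much stronger $N_n^{n}\geqslant n/2$, hence $N_n^{\param}\geqslant n/2$ for every $\param\leqslant n$, but only on an event of probability $1-C_\nu\log n/\sqrt n$. This defeats your bookkeeping in two places. First, you assume the bad event always has probability $\lesssim n^{-2}$, ``summable against the polynomial-in-$n$ prefactors''; in the recurrent regime it only has probability of order $\log n/\sqrt n$, and far from being negligible, its contribution is the whole story: bounding the loss on that event by $2(\param+1)$ and multiplying by $C_\nu\log n/\sqrt n$ is exactly what produces the stated rate $C_\nu \param\log n/\sqrt n$, while on the good event one gets the better $(\param+1)\sqrt{2(\log n+\log \param)/n}$. (This slow exception probability is also why \Cref{Thm:Adaptive} is stated only with high probability in the recurrent case.) Second, your proposed conclusion does not close numerically: if $\Ebf^\nu\crochet{(\param+1)/N_n^{\param}}\lesssim \param(\log n)^{c}/\sqrt n$ as you suggest, multiplying by the $\sqrt{n\log n}$ factor from Lemma~\ref{lem:ForLepski} yields $\param(\log n)^{c+1/2}$, which does not decay in $n$ at all, let alone give $\param\log n/\sqrt n$; this signals that the mechanism you attribute to the recurrent rate is not the right one. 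The correct argument is the paper's: on $\{N_n^{n}\geqslant n/2\}$ apply Lemma~\ref{lem:ForLepski} with $z=\log n$, and on the complement use the uniform bound $\norm{\infty}{\adens-\edens_n^{\param}}\leqslant 2(\param+1)$ together with \eqref{eq:NnMRec}.
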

\begin{proof}
Fix some $M\geqslant 1$.
If the chain is transient, by \cite[Lemmas 8 and 9]{diel:lerasle:2016}
, there exists a constant $C_\nu$ such that, for any $z>0$,
\[
\Pbf^\nu\parenth{\frac{N_n^{\param}}n\geqslant \frac{1}{C_{\nu}}\parenth{\frac1{ M^\kappa}-\frac{C_\nu\sqrt{z}}{\sqrt{n}}}}\geqslant 1-2e^{-z }\enspace.
\]

Therefore,
\begin{equation}\label{eq:BoundNnM}
\Pbf^\nu\parenth{\frac{N_n^{\param}}n\geqslant \frac{1}{C_{\nu} M^{\kappa}}} \geqslant1-2e^{-n/C_\nu^2M^{2\kappa}} \eqsp.
\end{equation}
Combining \eqref{eq:BoundNnM} with Lemma~\ref{lem:ForLepski} yields, for any $z\in\intof{0}{n/4C_\nu^2M^{2\kappa}}$,
\begin{equation}\label{eq:probBound}
\Pbf^\nu\parenth{\norm{\infty}{\adens - \edens_n^\param }>C_{\nu} M^{(1+\kappa)}\sqrt{\frac{z+\log M}{n}}}\leqslant 4e^{-z}\enspace.
\end{equation}
By \eqref{eq:def-phi} and \eqref{eq:def-estim-moment}, both $\adens$ and $\edens_n^\param$ are uniformly upper-bounded by $\param + 1$. One can therefore integrate \eqref{eq:probBound} to get, for any $M\leqslant n^{1/2\kappa}$,
\[
\Ebf^\nu\crochet{\norm{\infty}{\adens - \edens_n^\param }}\leqslant C_{\nu}M^{1+\kappa} \sqrt{\frac{\log M}{n}}\eqsp.
\]
As the bound also holds obviously for $M\geqslant n^{1/2\kappa}$ the proof of the transient case is complete.

Assume now that the chain is recurrent, then \cite[Lemma 10]{diel:lerasle:2016} states that there exists $C_{\nu}$ such that,
\begin{equation}\label{eq:NnMRec}
\Pbf^\nu\parenth{N_n^{n}\leqslant \frac n2}\leqslant C_{\nu}\frac{\log n}{\sqrt n}\eqsp.
\end{equation}
Combining \eqref{eq:NnMRec} with that of Lemma~\ref{lem:ForLepski} implies that, for any $M\in\{1,\ldots,n\}$,
  \[
\Pbf^\nu\parenth{\norm{\infty}{\adens - \edens_n^\param }>(M+1)\sqrt{\frac{2(z+\log \param)}{n}}}\leqslant 2e^{-z}+C_{\nu}\frac{\log n}{\sqrt n}\eqsp.
 \]
Choosing $z=\log n$ and noticing that $\norm{\infty}{\adens}\vee \norm{\infty}{\edens_n^\param}\leqslant M+1$ concludes the proof.
\end{proof}
\paragraph{Conclusion of the proof of Theorem~\ref{Thm:NonAdaptive}} Plugging the conclusions of Lemmas~\ref{lem:ControleBias} and ~\ref{lem:VarianceBasique} in \eqref{eq:DecBasique} yields, if $X$ is transient,
\begin{align*}
\Ebf^\nu\crochet{\norm{\infty}{\tdens - \edens_n^\param}}
& \leqslant
31L\param^{-\beta/2} +
C_{\nu} M^{1+\kappa}\sqrt{\frac{\log n}{n}}\eqsp .
\end{align*}
If $X$ is recurrent,
\begin{align*}
\Ebf^\nu\crochet{\norm{\infty}{\tdens - \edens_n^\param}}
& \leqslant
31L\param^{-\beta/2} +
C_{\nu} M\frac{\log n}{\sqrt{n}}\eqsp .
\end{align*}The proof of Theorem~\ref{Thm:NonAdaptive} is complete.
\subsection{Proof of Theorem~\ref{Thm:Adaptive}}

\subsubsection{Conclusion of the proof} As $B(M)$ and $V(M)$ are non-negative and respectively non-increasing and non-decreasing, Lemma~\ref{lem:GL} applies and one has therefore, on $\Omega$,
\[
\norm{\infty}{\tdens - \edens_n^{\widehat{M}_n}}\leqslant 5 \inf_{M\in\{1,\ldots,n-1\}}\accol{31L\param^{-\beta/2}+\frac{M+1}{N_n^{\param}}\sqrt{3n\log(n+1)}}\eqsp.
\]
Let $M_{\max}\leqslant (n/3 C_\nu\log n)^{1/2\kappa}$. A union bound in \eqref{eq:BoundNnM} gives that, if $X$ is transient,
\[
\Pbf^\nu\parenth{\forall M\in\{1,\ldots,M_{\max}\},\quad \frac{N_n^M}n\geqslant \frac1{C_\nu M^{\kappa}}}\geqslant 1-\sum_{M=1}^{M_{\max}}e^{-n/C_\nu M^{2\kappa}}\geqslant 1-M_{\max}e^{-3 \log n}\enspace.
\]
Therefore, on an event with probability larger than $1-(M_{\max}+1)e^{-3 \log n}-n^{-2}\geqslant 1-2n^{-2}$,
\begin{align*}
\norm{\infty}{\tdens - \edens_n^{\widehat{M}_n}}&\leqslant \inf_{M\in\{1,\ldots,M_{\max}\}}\accol{31L\param^{-\beta/2}+C_\nu M^{\kappa+1}\sqrt{\frac{\log n}{n}}}\\
&\leqslant C_{\nu,\alpha}\parenth{\frac{\log n}n}^{\beta/(4+4\kappa+2\beta)}\eqsp.
\end{align*}
The proof is concluded in the transient case since $\norm{\infty}{\edens_n^{\widehat{M}_n}}\leqslant n+1$.

Since $N_n^{\param}\geqslant N_n^n$ for any $\param\in\{1,\ldots,n-1\}$, in the recurrent case, one has therefore, by \eqref{eq:NnMRec}, on an event with probability larger than $1-C_{\nu}\log n/\sqrt{n}$,
\begin{align*}
\norm{\infty}{\tdens - \edens_n^{\widehat{M}_n}}&\leqslant \inf_{M\in\{1,\ldots,n-1\}}\accol{31L\param^{-\beta/2}+C_\nu M\frac{\log n}{\sqrt{n}}}\\
&\leqslant C_{\nu}\parenth{\frac{(\log n)^2}n}^{\beta/(2\beta+4)}\eqsp.
\end{align*}

\section*{Acknowledgements}
The work of Antoine Havet was supported by a public grant as part of the \textit{Investissement d’avenir} project, reference ANR-11-LABX-0056-LMH, LabEx LMH.\\
The work of Matthieu Lerasle was supported by a public grant from FAST-BIG project ANR-17-CE23-0011.
\bibliographystyle{abbrvnat}
\bibliography{BiblioSource}

\begin{thebibliography}{28}
\providecommand{\natexlab}[1]{#1}
\providecommand{\url}[1]{\texttt{#1}}
\expandafter\ifx\csname urlstyle\endcsname\relax
  \providecommand{\doi}[1]{doi: #1}\else
  \providecommand{\doi}{doi: \begingroup \urlstyle{rm}\Url}\fi

\bibitem[Adelman and Enriquez(2004)]{Ade_Enr:2004}
O.~Adelman and N.~Enriquez.
\newblock Random walks in random environment: what a single trajectory tells.
\newblock \emph{Israel J. Math.}, 142:\penalty0 205--220, 2004.
\newblock ISSN 0021-2172.
\newblock \doi{10.1007/BF02771533}.
\newblock URL \url{http://dx.doi.org/10.1007/BF02771533}.

\bibitem[{Akhiezer}(1965)]{akhiezer:1965}
N.~{Akhiezer}.
\newblock \emph{The classical moment problem and some related questions in
  analysis}.
\newblock Translated by N. Kemmer. Hafner Publishing Co., New York, 1965.

\bibitem[Alemany et~al.(2012)Alemany, Mossa, Junier, and Ritort]{Alemany2012}
A.~Alemany, A.~Mossa, I.~Junier, and F.~Ritort.
\newblock Experimental free-energy measurements of kinetic molecular states
  using fluctuation theorems.
\newblock \emph{Nat Phys}, 8\penalty0 (9):\penalty0 688--694, 2012.

\bibitem[Andreoletti et~al.(2015)Andreoletti, Loukianova, and
  Matias]{AndLouMat2015}
P.~Andreoletti, D.~Loukianova, and C.~Matias.
\newblock Hidden {M}arkov model for parameter estimation of a random walk in a
  {M}arkov environment.
\newblock \emph{ESAIM Probab. Stat.}, 19:\penalty0 605--625, 2015.
\newblock ISSN 1292-8100.
\newblock \doi{10.1051/ps/2015008}.
\newblock URL \url{http://dx.doi.org/10.1051/ps/2015008}.

\bibitem[Baldazzi et~al.(2006)Baldazzi, Cocco, Marinari, and
  Manasson]{Baldazzi2006}
B.~S. Baldazzi, V., S.~Cocco, E.~Marinari, and R.~Manasson.
\newblock Inference of {DNA} sequences from mechanical unzipping: an ideal-case
  study.
\newblock \emph{Phys. Rev. Lett.}, 96, 2006.

\bibitem[Baldazzi et~al.(2007)Baldazzi, Cocco, Marinari, and
  Manasson]{Baldazzi2007}
B.~S. Baldazzi, V., S.~Cocco, E.~Marinari, and R.~Manasson.
\newblock Inferring {DNA} sequences from mechanical unzipping data: the
  large-bandwidth case.
\newblock \emph{Phys. Rev. E}, 75, 2007.

\bibitem[{Bertin} and {Klutchnikoff}(2010)]{bertin:klutchnikoff:2010}
K.~{Bertin} and N.~{Klutchnikoff}.
\newblock {Minimax properties of beta kernel density estimators}.
\newblock \emph{ArXiv e-prints}, Jan. 2010.

\bibitem[Bertin and Klutchnikoff(2014)]{bertin:klutchnikoff:2014}
K.~Bertin and N.~Klutchnikoff.
\newblock Adaptive estimation of a density function using beta kernels.
\newblock \emph{ESAIM Probab. Stat.}, 18:\penalty0 400--417, 2014.
\newblock ISSN 1292-8100.
\newblock \doi{10.1051/ps/2014010}.
\newblock URL \url{http://dx.doi.org/10.1051/ps/2014010}.

\bibitem[Bouezmarni and Rolin(2003)]{Bouezmarni2003Consistency}
T.~Bouezmarni and J.-M. Rolin.
\newblock Consistency of the beta kernel density function estimator.
\newblock \emph{Canad. J. Statist.}, 31\penalty0 (1):\penalty0 89--98, 2003.
\newblock ISSN 0319-5724.
\newblock \doi{10.2307/3315905}.
\newblock URL \url{https://doi.org/10.2307/3315905}.

\bibitem[{Bouezmarni} and {Rolin}(2003)]{bouezmarni:rolin:2003}
T.~{Bouezmarni} and J.-M. {Rolin}.
\newblock {Consistency of the beta kernel density function estimator}.
\newblock \emph{Canadian Journal of Statistics}, 31\penalty0 (1):\penalty0
  {89--98}, 2003.
\newblock \doi{{10.2307/3315905}}.
\newblock URL \url{http://dx.doi.org/10.2307/3315905}.

\bibitem[Chen(1999)]{chen1999BetaKernel}
S.~X. Chen.
\newblock Beta kernel estimators for density functions.
\newblock \emph{Comput. Statist. Data Anal.}, 31\penalty0 (2):\penalty0
  131--145, 1999.
\newblock ISSN 0167-9473.
\newblock \doi{10.1016/S0167-9473(99)00010-9}.
\newblock URL \url{https://doi.org/10.1016/S0167-9473(99)00010-9}.

\bibitem[{{Chen}, Song Xi}(1999)]{chen:1999}
{{Chen}, Song Xi}.
\newblock {Beta kernel estimators for density functions}.
\newblock \emph{Computational Statistics and Data Analysis}, 31\penalty0
  (2):\penalty0 131 -- 145, 1999.
\newblock ISSN 0167-9473.
\newblock \doi{http://dx.doi.org/10.1016/S0167-9473(99)00010-9}.
\newblock URL
  \url{http://www.sciencedirect.com/science/article/pii/S0167947399000109}.

\bibitem[{Chernov}(1967)]{Chernov:1967}
A.~{Chernov}.
\newblock Replication of a multicomponent chain by the lightning mechanism.
\newblock \emph{Biofizika}, 12:\penalty0 297--301, 1967.

\bibitem[{Comets} et~al.(2014){Comets}, {Falconnet}, {Loukianov}, and
  {Loukianova}]{comets:falconnet:2014}
F.~{Comets}, M.~{Falconnet}, O.~{Loukianov}, and D.~{Loukianova}.
\newblock Maximum likelihood estimator consistency for recurrent random walk in
  a parametric random environment with finite support.
\newblock \emph{Stochastic Process. Appl.}, 126\penalty0 (11):\penalty0
  3578--3604, 2014.
\newblock ISSN 0304-4149.
\newblock \doi{10.1016/j.spa.2016.04.034}.
\newblock URL \url{http://dx.doi.org/10.1016/j.spa.2016.04.034}.

\bibitem[Comets et~al.(2016)Comets, Falconnet, Loukianov, and
  Loukianova]{Comets2016Maximum}
F.~Comets, M.~Falconnet, O.~Loukianov, and D.~Loukianova.
\newblock Maximum likelihood estimator consistency for recurrent random walk in
  a parametric random environment with finite support.
\newblock \emph{Stochastic Process. Appl.}, 126\penalty0 (11):\penalty0
  3578--3604, 2016.
\newblock ISSN 0304-4149.
\newblock \doi{10.1016/j.spa.2016.04.034}.
\newblock URL \url{https://doi.org/10.1016/j.spa.2016.04.034}.

\bibitem[Diel and Lerasle(2018)]{diel:lerasle:2016}
R.~Diel and M.~Lerasle.
\newblock Non parametric estimation for random walks in random environment.
\newblock \emph{Stochastic Processes and their Applications}, 128\penalty0
  (1):\penalty0 132--155, 2018.
\newblock ISSN 0304-4149.
\newblock \doi{http://dx.doi.org/10.1016/j.spa.2017.04.011}.
\newblock URL
  \url{http://www.sciencedirect.com/science/article/pii/S0304414917301254}.

\bibitem[Falconnet et~al.(2014{\natexlab{a}})Falconnet, Gloter, and
  Loukianova]{Fal_Glo_Lou:2014}
M.~Falconnet, A.~Gloter, and D.~Loukianova.
\newblock Maximum likelihood estimation in the context of a sub-ballistic
  random walk in a parametric random environment.
\newblock \emph{Math. Methods Statist.}, 23\penalty0 (3):\penalty0 159--175,
  2014{\natexlab{a}}.
\newblock ISSN 1066-5307.
\newblock \doi{10.3103/S1066530714030016}.
\newblock URL \url{http://dx.doi.org/10.3103/S1066530714030016}.

\bibitem[Falconnet et~al.(2014{\natexlab{b}})Falconnet, Loukianova, and
  Matias]{Fal_Lou_Mat:2014}
M.~Falconnet, D.~Loukianova, and C.~Matias.
\newblock Asymptotic normality and efficiency of the maximum likelihood
  estimator for the parameter of a ballistic random walk in a random
  environment.
\newblock \emph{Math. Methods Statist.}, 23\penalty0 (1):\penalty0 1--19,
  2014{\natexlab{b}}.
\newblock ISSN 1066-5307.
\newblock \doi{10.3103/S1066530714010013}.
\newblock URL \url{http://dx.doi.org/10.3103/S1066530714010013}.

\bibitem[{{Goldenshluger}, A. and {Lepski},
  O.}(2008)]{goldenshluger:lepski:2008}
{{Goldenshluger}, A. and {Lepski}, O.}
\newblock Universal pointwise selection rule in multivariate function
  estimation.
\newblock \emph{Bernoulli}, 14\penalty0 (4):\penalty0 1150--1190, 11 2008.
\newblock \doi{10.3150/08-BEJ144}.
\newblock URL \url{http://dx.doi.org/10.3150/08-BEJ144}.

\bibitem[Huguet et~al.()Huguet, Forns, and Ritort]{Huguet2009}
J.~M. Huguet, N.~Forns, and F.~Ritort.
\newblock Statistical properties of metastable intermediates in dna unzipping.
\newblock \emph{Phys. Rev. Lett.}, 103.

\bibitem[{Kesten} et~al.(1975){Kesten}, {Kozlov}, and
  {Spitzer}]{kesten:kozlov:spitzer:1975}
H.~{Kesten}, M.~{Kozlov}, and F.~{Spitzer}.
\newblock A limit law for random walk in a random environment.
\newblock \emph{Compositio Mathematica}, 30\penalty0 (2):\penalty0 145--168,
  1975.
\newblock URL \url{http://eudml.org/doc/89251}.

\bibitem[Koch et~al.(2002)Koch, Shundrovsky, Jantzen, and Wang]{Koch2002}
S.~J. Koch, A.~Shundrovsky, B.~C. Jantzen, and M.~D. Wang.
\newblock Probing protein-dna interactions by unzipping a single dna double
  helix.
\newblock \emph{Biophysical Journal}, 83\penalty0 (2):\penalty0 1098--1105,
  2002.

\bibitem[{Mnatsakanov}(2008)]{mnatsakanov:2008}
R.~{Mnatsakanov}.
\newblock {Hausdorff moment problem: Reconstruction of probability density
  functions}.
\newblock \emph{Statistics and probability letters 78}, \penalty0 (2):\penalty0
  1869--1877, 2008.

\bibitem[{Mnatsakanov} and {Ruymgaart}(2003)]{mnastakanov:ruymgaart:2003}
R.~{Mnatsakanov} and F.~H. {Ruymgaart}.
\newblock Some properties of moment-empirical {CDF}'s with application to some
  inverse estimation problems.
\newblock \emph{Math. Methods Statist.}, 12\penalty0 (4):\penalty0 478--495
  (2004), 2003.
\newblock ISSN 1066-5307.

\bibitem[Sina{\u\i}(1982)]{Sinai:1982}
Y.~G. Sina{\u\i}.
\newblock The limit behavior of a one-dimensional random walk in a random
  environment.
\newblock \emph{Teor. Veroyatnost. i Primenen.}, 27\penalty0 (2):\penalty0
  247--258, 1982.
\newblock ISSN 0040-361X.

\bibitem[{Solomon}(1975)]{solomon:1975}
F.~{Solomon}.
\newblock Random walks in a random environment.
\newblock \emph{Ann. Probab.}, 3\penalty0 (1):\penalty0 1--31, 02 1975.
\newblock \doi{10.1214/aop/1176996444}.
\newblock URL \url{http://dx.doi.org/10.1214/aop/1176996444}.

\bibitem[{Tsybakov}(2009)]{tsybakov:2009}
A.~{Tsybakov}.
\newblock \emph{Introduction to nonparametric estimation}.
\newblock Springer Series in Statistics. Springer, New York, 2009.
\newblock ISBN 978-0-387-79051-0.
\newblock \doi{10.1007/b13794}.
\newblock URL \url{http://dx.doi.org/10.1007/b13794}.
\newblock Revised and extended from the 2004 French original, Translated by
  Vladimir Zaiats.

\bibitem[Zeitouni(2012)]{Zei:2012}
O.~Zeitouni.
\newblock Random walks in random environment.
\newblock In \emph{Computational complexity. {V}ols. 1--6}, pages 2564--2577.
  Springer, New York, 2012.
\newblock \doi{10.1007/978-1-4614-1800-9_157}.
\newblock URL \url{http://dx.doi.org/10.1007/978-1-4614-1800-9_157}.

\end{thebibliography}

\end{document}